\newcommand{\remove}[1]{}
\newcommand{\ignore}[1]{}
\newtheorem{theorem}{Theorem}[section]
\newtheorem{corollary}[theorem]{Corollary}
\newtheorem{lemma}[theorem]{Lemma}
\newtheorem{definition}[theorem]{Definition}
\newtheorem{notation}[theorem]{Notation}
\newtheorem{claim}[theorem]{Claim}
\newtheorem{observation}[theorem]{Observation}
\newtheorem{remark}[theorem]{Remark}
\newcommand{\bae}{\begin{equation}\begin{aligned}}
\newcommand{\eae}{\end{aligned}\end{equation}}
\newcommand{\constD}{\phi}
\newcommand{\constK}{\Gamma}
\newcommand{\graphA}{G}
\newcommand{\graphB}{G_c}
\newcommand{\graphC}{G_{1}}
\newcommand{\graphD}{G_{2}}
\newcommand{\tm}{T(M)}
\newcommand{\tmgamma}{T(M,\Gamma)}
\newcommand{\la}{\mathbb{Z}^2 \left(\frac{1}{M}\right)}
\newcommand{\area}{\textrm{area}}
\newcommand{\wlogg}{\textrm{without loss of generality}}
\begin{document}

\title{Edge-Removal and Non-Crossing Perfect Matchings}

\author{Aviv Sheyn  \and Ran J. Tessler}

\thanks{We would like to thank  Micha Asher Perles for introducing us the subject, and for many helpful discussions and ideas.}

\maketitle

\begin{abstract}
We study the following problem - How many arbitrary edges can be
removed from a complete geometric graph with $2n$ vertices such that
the resulting graph always contains a perfect non-crossing matching?
We first address the case where the boundary of the convex hull of the original graph
contains at most $n+1$ points. In this case we show that $n$ edges can be removed, one more than the general case. In the second part we establish a lower bound for the case where the $2n$
points are randomly chosen. We prove that with probability which tends to
$1$, one can remove any $n+\Theta (\frac{n}{\log {(n)}})$ edges but the residual graph
will still contain a non-crossing perfect matching.
%and
%always leave a non-crossing perfect matching in the resulting graph.
We also discuss the upper bound for the number of arbitrary edges one must remove in
order to eliminate all the non-crossing perfect matchings.
%find a set of $2n$ points in the plane for which one must
%remove $2n-1$ edges in order to eliminate all non-crossing perfect
%matchings. %Finally we give an upper bound for the random case and
%some partial results for the case where the matching is almost
%perfect.

\end{abstract}

\section{Definitions, Motivation and Background}
\begin{definition}
A geometric graph $(G,i)$ is an embedding $i : G\mapsto\mathbb{R}$ of a graph $G=(V(G),E(G))$ in the plane such that the vertices (the elements of $V(G)$) are mapped to points in general position, and the edges (the elements of $E(G)$) are mapped to the the straight line segments connecting them. Those segments may intersect.
\end{definition}
For convenience we shall not mention the embedding $i$ and consider the graph $G$ as its embedding in the plane.

\begin{definition}
As usual, we say that $H = (V(H),E(H))$ is a (geometric) subgraph of $G$ if $V(H)\subseteq{V(G)}$ and $E(H)\subseteq{E(G)}$.
For a subgraph $H$ we denote by $G-H$ the graph $(V(G), E(G)-E(H))$.
\end{definition}
\begin{notation}
%Fix some direction in the plane. Let $l$ be a line in the plane not parallel to this direction.
Let $l$ be an oriented line in the plane. We denote by $\emph{r}(l), \emph{l}(l)$ the halfplanes right of $l$ and left of $l$ with respect to the chosen orientation. In addition, given an imbedded planar graph $\graphA$, we denote and  by $LP(l),RP(l)$ the sets of points in $V(\graphA)$ which lie to the left of $l$ and to the right of $l$ ,respectively. We omit $\graphA$ from the notation, as it will usually be clear to which graph are we referring.
\end{notation}
\begin{definition}
Given a geometric graph $G=(V,E)$, we define a $\emph{Perfect}$ $\emph{Non Crossing Geometric Matching}$ as a subset $M\subseteq{E}$ of non-intersecting edges such that any vertex of $G$ belongs to exactly one edge in $M$.
\end{definition}
As we shall use this term many times in this article, we shall often call a perfect non-crossing perfect geometric matching simply a \emph{matching}.
\begin{notation}
Let $\graphA$ be a complete geometric graph on $2n$ vertices. Denote by $h(\graphA)$
the largest number with the property that for any set of $h(\graphA)$ edges, if we remove them from the graph, we can still find a perfect matching in the resulting graph.
\end{notation}

Of course, $h(\graphA)$ depends on the positions of the vertices of $\graphA$ in the plane. If the $2n$ points are in convex position then it is easy to see that we have $n$ disjoint matchings, so the removal of $n-1$ edges must leave a matching. On the other hand it is also easy to see that if we remove $n$ adjacent boundary edges from $\graphA$, no matching remains.
In \cite{KE} the collection of all sets of $n$ edges whose removal eliminates every matching is fully characterized.
So for any $\graphB$ with vertices in convex position, we have $h(\graphB)=n-1$.

The question of how many edges is it possible to remove without eliminating all the matchings in the general case, when the points are not in convex position, was solved in \cite{PE} and \cite{AOCS}. For the sake of completeness, we provide their solution and proof in the next section, as we generalize those methods in later sections of this paper.

Questions regarding matchings and other combinatorial objects in geometric graphs are rather natural and appear both in theoretical papers in combinatorial geometry and in papers regarding geometric computation. A good source for open questions in that field and some recently proved results is the recent paper \cite{AOCS}.

One of the most important tools we use is the combinatorial version of the Ham Sandwich Theorem:

\begin{theorem} \label{thm:ham} $[Ham ~ Sandwich]$
Let $X$ and $Y$ be two sets of red and blue points, respectively, in general position in the plane. There exists a line $l$ such that $|X \cap{\emph{r}(l)}|=|X \cap{\emph{l}(l)}|$ and $|Y \cap{\emph{r}(l)}|=|Y \cap{\emph{l}(l)}|$. Moreover, if either $|X|$ or $|Y|$ is even, we may also assume that $l$ is not parallel to any segment which connects two points in the union of $X$ and $Y$.
\end{theorem}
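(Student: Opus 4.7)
The plan is to establish the Ham Sandwich Theorem by a rotating-line argument combined with a discrete intermediate value principle. For each angle $\theta \in [0, 2\pi)$ I would consider the family of oriented lines in direction $\theta$ and select a canonical line $\ell(\theta)$ that is an \emph{$X$-bisector}: an oriented line having $\lfloor |X|/2 \rfloor$ points of $X$ in the open half-plane $r(\ell(\theta))$ and $\lfloor |X|/2 \rfloor$ in the open half-plane $l(\ell(\theta))$. Such a line always exists: translate a line of direction $\theta$ across the plane, and the count on the right increases by one each time an $X$-point is crossed, so balance is attained at some position. When $|X|$ is even there is an interval of admissible positions; when $|X|$ is odd, $\ell(\theta)$ must pass through exactly one point of $X$.

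I would then define the integer-valued function $f(\theta) = |Y \cap r(\ell(\theta))| - |Y \cap l(\ell(\theta))|$ and exploit the key symmetry $f(\theta + \pi) = -f(\theta)$, which holds because reversing the orientation of a line swaps the roles of $r$ and $l$. Thus $f$ must take both non-positive and non-negative values as $\theta$ ranges over $[0, \pi]$. The function is not continuous, but it only changes at finitely many critical angles $\theta$ where $\ell(\theta)$ becomes incident to a point of $X \cup Y$, and the jumps are uniformly bounded. A careful bookkeeping then shows that any sign change of $f$ must be witnessed either by some $\theta^\star$ with $f(\theta^\star) = 0$, giving $\ell(\theta^\star)$ as a perfect $Y$-bisector, or by a critical angle at which a point of $Y$ lies on the line and the remaining $Y$-points split evenly --- which is exactly the bisection statement in the odd-$|Y|$ case.

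For the \emph{moreover} clause, suppose $|X|$ is even. Then for each $\theta$ there is a whole interval of $X$-bisectors, so the candidate lines form a two-dimensional strip and the $Y$-bisection condition cuts out a curve inside it. Since only finitely many directions are parallel to segments spanned by two points of $X \cup Y$, a small perturbation of $\theta$ about the line produced above preserves bisection of both colors while avoiding every forbidden direction. The case $|Y|$ even is handled symmetrically by swapping the roles of $X$ and $Y$.

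The main obstacle will be making the discrete intermediate value step precise: one must choose $\ell(\theta)$ canonically within each admissible strip (for instance as its midpoint), then classify the finitely many jump events and verify that every sign change either produces a true zero of $f$ or a critical line that already realizes the desired bisection of $Y$. None of the individual steps is technically deep, but the case analysis around the parity of $|X|$ and $|Y|$ and around the behavior of $f$ at critical angles requires careful handling.
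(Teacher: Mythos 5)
The paper does not actually prove Theorem~\ref{thm:ham}; it treats the discrete planar Ham Sandwich Theorem as a known result and refers the reader to \cite{MA}, where the proof goes through the Borsuk--Ulam theorem (lift the two point sets to the sphere via the standard embedding and apply the antipodal map argument). Your rotating-line proof is therefore a genuinely different, and more elementary, route: you replace the topological input by a hand-made discrete intermediate value argument built on the antipodal symmetry $f(\theta+\pi)=-f(\theta)$. The outline is correct, and the points you flag as delicate are indeed the ones that need care: the canonical choice of $\ell(\theta)$ must be orientation-reversal invariant (the midpoint of the admissible strip when $|X|$ is even, the median point when $|X|$ is odd) so that the symmetry actually holds; and at a critical angle where two points of $Y$ lie on the line simultaneously, $f$ can jump by $4$ and skip zero, but then the critical line itself already satisfies $|Y\cap r(l)|=|Y\cap l(l)|$ since the paper's statement counts only points off the line. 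Your treatment of the \emph{moreover} clause is slightly imprecise as written --- perturbing $\theta$ and re-selecting the canonical $X$-bisector need not preserve the $Y$-bisection; the cleaner formulation is to rotate the specific line already found about a fixed center (a $Y$-point lying on it if $|Y|$ is odd, or any interior point of the line otherwise), which for a sufficiently small rotation moves no point across the line and avoids the finitely many forbidden directions. The trade-off between the two approaches is the usual one: the Borsuk--Ulam proof generalizes to higher dimensions and to measures, while your argument is self-contained and purely combinatorial but confined to the plane, which is all this paper needs.
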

For a proof see \cite{MA}.

The following consequence of this theorem will be of great importance in the forthcoming sections:

\begin{theorem} \label{thm:zvuvonim}
For any set of $n$ red points and $n$ blue points in general position in the plane, there exists a non-crossing perfect matching such that each edge of the matching has one blue vertex and one red vertex.
\end{theorem}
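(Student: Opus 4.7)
The plan is to prove Theorem~\ref{thm:zvuvonim} by induction on $n$, using the Ham Sandwich theorem (Theorem~\ref{thm:ham}) as the key tool. The base case $n=1$ is immediate: connect the unique red point to the unique blue point. For the inductive step, I would apply Theorem~\ref{thm:ham} to produce a line $l$ that evenly splits both $X$ (red) and $Y$ (blue), recurse on the point sets lying in each open halfplane, and then combine: edges produced in different open halfplanes cannot cross since those halfplanes are disjoint. The general position hypothesis is preserved in each recursive call, since any three collinear points in a subset were already collinear in the original configuration.

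The argument splits by parity of $n$. When $n$ is even, both $|X|$ and $|Y|$ are even, so the ``moreover'' clause of Theorem~\ref{thm:ham} applies: $l$ is not parallel to any segment joining two points of $X \cup Y$. In particular $l$ cannot contain two points of $X \cup Y$, and since $|X|, |Y|$ are even, the number of points of each color on $l$ must be even. Hence no point of $X \cup Y$ lies on $l$, and each open halfplane contains exactly $n/2$ red and $n/2$ blue points. The inductive hypothesis then produces a non-crossing bichromatic matching on each side; their union is the desired matching.

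When $n$ is odd, the parity equality forces the number of red (resp.\ blue) points on $l$ to be odd, hence at least one. General position allows at most two points of $X \cup Y$ on any line, so in fact $l$ meets $X$ in a unique red point $r$ and $Y$ in a unique blue point $b$. I would pair $r$ with $b$; the edge $rb$ lies entirely on $l$ and thus cannot cross any edge contained strictly in an open halfplane. Each open halfplane then holds $(n-1)/2$ red and $(n-1)/2$ blue points, to which the inductive hypothesis applies, completing the construction. The main obstacle I anticipate is precisely this odd case: one must justify that the Ham Sandwich line meets each color in exactly one point (rather than, say, two), and verify that the added edge $rb$ is compatible with the two recursively produced matchings. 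Once this bookkeeping is handled, the argument becomes a clean divide-and-conquer.
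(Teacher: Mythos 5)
Your proof is correct and follows essentially the same route as the paper, which disposes of this theorem in one line: ``repeated use of the Ham Sandwich theorem, yielding a partition of the plane into convex regions, each of which contains exactly one blue and one red point.'' Your parity bookkeeping in the odd case (exactly one point of each color on the bisecting line, matched to each other) correctly fills in the detail the paper leaves implicit.
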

This can be proved by repeated use of the Ham Sandwich theorem, yielding a partition of the plane into convex regions, each of which contains exactly one blue and one red point.

\section{The General Case}
\label{sec:general case}
We now return to the question, in the general case, where the points are not necessarily in convex position, how many edges is it always possible to remove?
As said before, this problem was solved in \cite{PE} and \cite{AOCS}. Later we will considerably extend the method used in the proof of the result in the general case in order to establish one of our two main theorems, and therefore, for the sake of completeness we shall present the original claim and its proof.

\begin{theorem} \label{thm:general}
Given a set of $2n$ points in general position in the plane, let $\graphA$ denote the complete graph on this set.  Then the removal of any $n-1$ edges from $\graphA$ will always leave a matching.  This result is tight with respect to the number of edges removed.
\end{theorem}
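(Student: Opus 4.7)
My plan is to reduce the problem to \thref{thm:zvuvonim} via a balanced $2$-coloring argument. Given the $2n$ points and an arbitrary set $E'$ of $n-1$ edges to be removed, I will view $E'$ as a graph $H$ on the $2n$ vertices and seek a partition $V(\graphA) = R \sqcup B$ with $|R| = |B| = n$ such that every edge of $E'$ is monochromatic, i.e., lies inside a single color class. Once such a coloring is fixed, \thref{thm:zvuvonim} produces a non-crossing perfect matching in which every edge joins a red vertex to a blue vertex; since each removed edge is monochromatic, none of them appears in this matching, so it is a valid non-crossing perfect matching of $\graphA - H$.

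The heart of the argument is to show that such a balanced 2-coloring always exists, and for this it suffices to color each connected component of $H$ a single color, which automatically makes every edge of $E'$ monochromatic. Since $H$ has $n-1$ edges, any component on $k$ vertices contains at least $k-1$ edges of $H$, which forces $k \leq n$; moreover, $H$ has at least $2n - (n-1) = n+1$ connected components. Writing the component sizes as $c_1, \ldots, c_m$ with $m \geq n+1$, each $c_i \leq n$, and $\sum_i c_i = 2n$, the task reduces to finding a subset $S \subseteq \{1,\ldots,m\}$ with $\sum_{i \in S} c_i = n$.

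I will establish this subset-sum statement by a pigeonhole argument on partial sums. Order the components arbitrarily and set $T_0 = 0$ and $T_j = c_1 + \cdots + c_j$, so $T_m = 2n$. If some $T_j$ equals $n$ we are done; otherwise the $m-1$ intermediate values $T_1, \ldots, T_{m-1}$ lie in $\{1, \ldots, 2n-1\}$ and take residues only in $\{1, \ldots, n-1\}$ modulo $n$. Since $m-1 \geq n > n-1$, two of them coincide modulo $n$, say $T_i \equiv T_j \pmod{n}$ with $i < j$; then $T_j - T_i$ is a positive multiple of $n$ strictly less than $2n$, hence equal to $n$, and $\{c_{i+1}, \ldots, c_j\}$ is the desired subset.

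Tightness follows from the remark already recorded for convex position in the introduction: for $2n$ points in convex position, removing $n$ consecutive boundary edges of the convex hull destroys every non-crossing perfect matching, so the bound $n-1$ cannot be improved in general. The main obstacle I anticipate is the subset-sum step; the pigeonhole trick above is the natural route, but some care is needed to avoid the degenerate coincidence $T_0 \equiv T_m \pmod{n}$ (which only yields the trivial full-set subset), and the analysis above sidesteps this by working exclusively with the intermediate partial sums.
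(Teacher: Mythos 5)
Your proposal is correct, and its overall reduction coincides with the paper's: produce a balanced two-coloring of the $2n$ points under which every removed edge is monochromatic, then invoke \thref{thm:zvuvonim} to obtain a bichromatic non-crossing perfect matching, which automatically avoids the removed edges. Where you genuinely diverge is in the existence proof for the balanced coloring. The paper takes $H$ to consist only of the removed edges and their endpoints, orders its components by decreasing size, greedily accumulates them until the next one would overshoot $n$, and then pads up to exactly $n$ with vertices not covered by any removed edge; the padding is justified by a counting argument showing that if the largest component has $k$ vertices, at least $k$ vertices of $\graphA$ lie outside $H$. You instead regard every vertex as lying in some component (isolated vertices as singletons), reduce to an exact subset-sum question for the component sizes $c_1,\dots,c_m$ with $\sum c_i = 2n$ and $m \geq 2n-(n-1) = n+1$, and solve it by pigeonhole on the prefix sums modulo $n$: if no intermediate prefix sum equals $n$, the $m-1 \geq n$ intermediate sums occupy the $n-1$ nonzero residues, two agree, and their difference is a positive multiple of $n$ strictly below $2n$, hence equal to $n$. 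This step is sound, and your treatment correctly avoids the degenerate coincidence $T_0 \equiv T_m$. What your route buys is self-containment — no estimate on the number of uncovered vertices and no case analysis on whether the components already sum to at most $n$; what the paper's route buys is that the same greedy-plus-padding template is reused verbatim in the proof of part $(a)$ of \lemref{lem:threeparts}, so the authors' choice is structurally convenient for the rest of the paper. The tightness claim is handled identically in both, via $n$ consecutive hull edges in convex position.
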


\begin{proof}
Let $S$ be a set of at most $n-1$ edges, $H$ be the subgraph of $G$ containing these edges and their vertices.
Denote by $V_1, V_2, ... , V_m$ the vertex sets of the components of $H$ ordered by size (in decreasing order).
We claim that it is possible to partition the $2n$ vertices of $\graphA$ into two sets of $n$ points each such that the vertex set of each connected component of $H$ is contained in one of the sets.
If $\sum_{j\leq{m}}|V_j|\leq{n}$ then our claim is clearly true. Otherwise let $i$ be such that $\sum_{j\leq{i}}|V_j|\leq{n}$ and  $\sum_{j\leq{i+1}}|V_j|>n$. Such an $i$ exists because $H$ has at most $n-1$ edges, hence $|V_1|\leq{n}$. \\
Note that if some component of $H$ has $k$ vertices then it contains at least $k-1$ edges, leaving at most $n-k$ edges in the remaining components of $H$.  These edges can cover at most $2(n-k)$ vertices of the $2n-k$ remaining vertices in $G$, so there must be at least $k$ vertices in $\graphA$ that are not in $H$. Therefore there is a set of $|V_{i+1}|$ vertices of $\graphA$ not in $H$ from which we can choose vertices which combined with the sets $V_1, V_2, ... , V_i $  make exactly $n$ vertices (this is possible since by our choice of $i$, $|V_{i+1}|>n-\sum_{j\leq{i}}|V_j|$).  Thus we have a set of $n$ points which satisfies our claim.
We now have two sets of $n$ points, and each edge of $H$ has both endpoints in one set. By Theorem \ref{thm:zvuvonim} we have a matching between the two sets and this matching does not contain any edge of $H$.
\end{proof}
\begin{remark}
We noted already that in the convex case we can remove $n$ edges and eliminate all matchings so the result is tight with respect to the number of edges in $H$.\\
\end{remark}
What happens if the points are not in convex position - is it always possible to remove $\emph{more}$ than $n-1$ edges and still have a matching?
In \cite{AOCS} it is conjectured that for $k\geq{n-2}$, if a set of $2n$ points has at least $2n-k$ points on the boundary of its convex hull then it is possible to remove $k+1$ edges and still have a matching. The case of $k=n-2$ follows from the theorem above. We now prove our first main theorem which is the case where $k=n-1$.

\begin{theorem}\label{thm:hGc1isn}
Starting from the complete graph on $2n$ vertices in the plane in general position whose convex hull's boundary contains at most $n+1$ vertices, we can remove any $n$ edges and still have a matching.
\end{theorem}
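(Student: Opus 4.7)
The plan is to extend the partition-and-match strategy used in the proof of Theorem~\ref{thm:general}. Given a subgraph $H$ with $|E(H)|=n$, I try to two-color $V(\graphA)$ with $n$ red and $n$ blue vertices so that every connected component of $H$ is monochromatic; Theorem~\ref{thm:zvuvonim} then produces a non-crossing bichromatic perfect matching, which automatically avoids $H$.

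Let $V_1,\dots,V_m$ be the components of $H$ of size at least two, sorted so that $k_i:=|V_i|$ is decreasing; one has $\sum_i(k_i-1)\leq n$. A short analysis of the achievable subset sums of $(k_1,\dots,k_m)$ (together with the $2n-\sum_i k_i$ isolated vertices used to pad) shows that a balanced monochromatic partition exists except in exactly two exceptional configurations:
(F1) $m=1$ and $k_1=n+1$, that is, $H$ is a spanning tree $T$ of some $(n+1)$-vertex set and the remaining $n-1$ vertices are isolated outside $H$;
(F2) every $k_i=2$ and $m=n$, that is, $H$ itself is a perfect matching of the $2n$ vertices, and additionally $n$ is odd.

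In either exceptional case I would pick a single \emph{split edge} $e=uv\in H$, color $u$ red and $v$ blue, and extend to a balanced two-coloring in which every other $H$-edge is monochromatic: in (F1) by coloring the two subtrees of $T-e$ with opposite colors and then freely distributing the $n-1$ outside vertices between the classes; in (F2) by partitioning the remaining $n-1$ edges of $H$ (note $n-1$ is even precisely because $n$ is odd) into $(n-1)/2$ red pairs and $(n-1)/2$ blue pairs. Under this coloring, the only $H$-edge that is not monochromatic is $e$ itself, so any bichromatic non-crossing matching produced by Theorem~\ref{thm:zvuvonim} avoids $H$ unless it happens to include $e$.

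The crucial step---and the main obstacle---is then to exploit the hypothesis that the convex hull has at most $n+1$ boundary vertices (equivalently, at least $n-1$ interior vertices) to choose $e$, along with the remaining coloring freedom, so that some bichromatic non-crossing matching in fact omits $e$. I would look for $e$ with a geometrically convenient endpoint: in (F1), taking $e$ to be a leaf edge of $T$ so that the leaf has many interior isolated alternatives to its unique $H$-neighbor; in (F2), using pigeonhole to locate an $H$-edge with at least one interior endpoint, which must exist because $2n$ vertices must be covered by the $n$ pairs while only $n+1$ slots lie on the boundary. The Ham Sandwich construction underlying Theorem~\ref{thm:zvuvonim} is then initialized with a bisecting line passing strictly between $u$ and $v$, forcing $u$ and $v$ into distinct atomic cells of the recursive partition and hence never into the same matching pair. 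Making this final step fully rigorous, in particular verifying that such a controlled Ham Sandwich recursion can always be arranged under the convex-hull hypothesis, is the principal technical difficulty I anticipate.
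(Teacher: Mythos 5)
Your reduction to a single ``split edge'' is a sensible reorganization (and, as an aside, your case (F2) --- $H$ a perfect matching with $n$ odd --- pinpoints a configuration that the connectivity argument in the paper's Lemma~\ref{lem:threeparts}(a) passes over rather quickly, since there are then no isolated vertices to pad with). But the proposal has a genuine gap exactly where the content of the theorem lies. Everything up to your last paragraph is independent of the hypothesis on the convex hull, and the statement is false without that hypothesis: for $2n$ points in convex position, a path on $n+1$ consecutive hull vertices is precisely your case (F1) and does eliminate every matching. So the entire burden of proof is the step you defer: producing a bichromatic non-crossing matching that avoids the one bichromatic edge $e=uv$ of $H$. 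The mechanism you propose --- initializing the Ham Sandwich recursion with a simultaneous bisector that separates $u$ from $v$ --- is not available: Theorem~\ref{thm:ham} guarantees that \emph{some} line bisects both color classes, but one cannot additionally prescribe that it put two designated points on opposite sides. Indeed, in the convex counterexample just mentioned no such line can exist (otherwise your argument would produce a matching there), so any proof of its existence must invoke the bound of $n+1$ hull vertices, and you give no concrete way for that bound to enter.

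For comparison, the paper's route is structural rather than coloring-based: it shows (Lemma~\ref{lem:threeparts} together with Claims~\ref{clm:leaf}--\ref{clm:2 lines}) that a blocking set of $n$ edges must be a tree all of whose vertices of degree at least $2$ lie on the hull boundary, that the supporting line of each leaf edge is crossed by no other edge of $H$ and balances the two colors on each side, and that between consecutive such supporting lines there must sit a blue hull vertex; comparing the $n-k+1$ regions cut out by the leaf-edge lines with the at most $n-k$ blue hull vertices then yields the contradiction. Those rotating-line and region-counting arguments are, in effect, the machinery you would need to build to justify your final step; as written, the proposal stops just short of it.
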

\begin{remark}
There are geometric graphs for which this result is tight.
\end{remark}
The idea of the proof will be to show first that any subgraph $H$ with $n$ edges that eliminates all matchings (i.e., there is no matching in $G-H$), must have a certain structure and then to use the condition on the boundary of the convex hull to show that for any complete geometric graph $\graphC$ which satisfies this condition such a subgraph does not exist.

\begin{lemma} \label{lem:threeparts}
Let $\graphA$ be a complete geometric graph on $2n$ points in the plane in general position, and let $H$ be a subgraph of $\graphA$ with $n$ edges such that there is no matching in $\graphA-H$.  Then:\\
%\begin{enumerate}
%\item $H$ is a tree.
%\item All vertices of degree at least $2$ in $H$ are on the boundary of the convex hull of $\graphA$.
%\item If we color the vertices of $H$ by red and those vertices not in $H$ by blue, then for any edge $e$ of $H$ , let $l_e$ be the supporting line (the line through the vertices of $e$).
% Then on each side of $l_e$ the number of red points is equal to the number of blue points.
%\end{enumerate}
$(a)$ $H$ is a tree.\\
$(b)$ All vertices of degree at least $2$ in $H$ are on the boundary of the convex hull of $\graphA$.\\
$(c)$ If we color the vertices of $H$ by red and those vertices not in $H$ by blue, then for any edge $e$ of $H$, let $l_e$ be the supporting line (the line through the vertices of $e$).
Then on each side of $l_e$ the number of red points is equal to the number of blue points.
\end{lemma}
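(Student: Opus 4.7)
The plan is to prove each of (a), (b), (c) by contrapositive: under the hypothesis that $H$ has $n$ edges, assume the conclusion fails and exhibit a non-crossing perfect matching in $\graphA - H$, contradicting the assumption.

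For (a), split into ``$H$ contains a cycle'' and ``$H$ is a disconnected forest with $c \geq 2$ components''. In the cycle case, deleting one cycle edge yields $H' \subsetneq H$ with $n - 1$ edges and identical connected components; \thref{thm:general} applied to $H'$ produces a partition $V(\graphA) = L \sqcup R$ with $|L| = |R| = n$ and every component of $H$ lying in one side. Hence every edge of $H$ is internal to one side, and \thref{thm:zvuvonim} with color classes $L$ and $R$ yields a non-crossing matching between $L$ and $R$ avoiding all of $H$. In the disconnected-forest case the same template works: using $|V(H)| = n + c$ and $|V_j| \geq 2$, find a subset of components whose total size lies in $[c, n]$ and pad with the $n - c$ non-$H$ vertices to get the partition. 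The residual configurations where the subset sum fails (notably $H$ a perfect matching with $n$ odd) are handled by a bicoloring variant: color so each of $n - 1$ matching edges is monochromatic and the one remaining edge is bichromatic, giving a balanced red/blue partition, and then vary which edge is bichromatic to ensure the \thref{thm:zvuvonim} matching avoids it.

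For (c), assume (a) so $H$ is a tree with $n + 1$ red vertices and $n - 1$ blue vertices, and fix $e = pq \in H$. Suppose for contradiction $r_L \neq b_L$, and write $r_L - b_L = k \geq 1$; then $b_R - r_R = k$ as well. Pair $p$ with a red $p' \in L$ chosen so that $pp' \notin H$ (possible since $H$ is a tree and $p$ has only $\deg_H(p) - 1$ other $H$-neighbors, leaving a valid candidate among the $r_L$ reds on the left in non-degenerate cases) and pair $q$ with a blue of $R$; then select $k - 1$ further ``straight-across'' pairs, each matching a red of $L$ to a blue of $R$, with an angular ordering ensuring pairwise non-crossing. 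After these $k + 1$ pairs, the remaining reds and blues on each side of $l_e$ are equal, so \thref{thm:zvuvonim} applied on each side extends to a full non-crossing matching in $\graphA - H$. The principal obstacle is this angular-sweep non-crossing argument and the degenerate-$p$ case where every red in $L$ is an $H$-neighbor of $p$.

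For (b), assume (a) and (c), and suppose $v \in V(H)$ has $H$-degree $d \geq 2$ while lying strictly inside the convex hull. Pick two incident edges $vu_1, vu_2$ with supporting lines $l_1, l_2$ meeting at $v$ and splitting the plane into four open quadrants. Applying (c) to each $l_i$ yields four linear equations in the signed quadrant differences $a_j = r_j - b_j$. Since $v$ is interior, every line through $v$ has vertices of $V(\graphA)$ on both sides; rotating $l_1$ infinitesimally about $v$ to a line $l_1^*$ not containing any $H$-edge and running a bisection-style argument at $l_1^*$ produces a further count relation incompatible with the (c)-system, yielding the needed contradiction.
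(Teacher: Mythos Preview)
Your treatment of part (a) is close to the paper's: both reduce to the partition argument behind \thref{thm:general}. You are more explicit than the paper about the residual case where $H$ is itself a perfect matching with $n$ odd, but your proposed fix (``vary which edge is bichromatic'') is not justified: for each choice of bichromatic edge $e$, the red--blue matching produced by \thref{thm:zvuvonim} may well be forced to use $e$, and you give no argument that some choice avoids it.

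The real gaps are in (b) and (c), and they stem from attempting (c) before (b). In your argument for (c) the counting is fine, but the non-crossing claim is where all the content lies and you have not supplied it. The $k-1$ ``straight-across'' segments from reds in $L$ to blues in $R$, together with $pp'$ and the $q$--blue edge, cut each of $L$ and $R$ into several convex pieces; to finish with \thref{thm:zvuvonim} on each side you would need the red and blue counts to balance \emph{in every such piece}, not just globally on each side, and an angular sweep from $p$ or $q$ does not arrange this. You also have no mechanism to keep the across-segments out of $H$. The paper avoids this entirely: it proves (b) first, via a chain of rotation and intermediate-value arguments (every leaf's neighbour lies on the hull; no interior vertex of degree $\ge 2$ can be adjacent to a hull vertex), and then (c) follows from a separate claim (\clref{clm:hezi}) about edges incident to hull vertices of degree $\ge 2$. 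The order matters.

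For (b) your sketch is not a proof. Applying (c) to the two lines through $v$ yields, in your notation, $a_1+a_2=a_3+a_4$ and $a_1+a_4=a_2+a_3$, hence $a_1=a_3$, $a_2=a_4$; there is no inconsistency. Rotating to a nearby line $l_1^*$ ``not containing any $H$-edge'' gives no new constraint, since (c) only speaks about supporting lines of $H$-edges. The paper's proof of (b) is a multi-case geometric construction that actually exhibits a matching in $\graphA-H$ whenever an interior vertex has degree $\ge 2$; your proposal does not engage with that construction at all.
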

We first prove part $(a)$.
\begin{proof}
\emph{[Part $(a)$]} First, we show that $H$ must be connected.
If $H$ is disconnected then we denote by $V_1, V_2, ... , V_k $ the sets of vertices of the components of $H$ (in order of decreasing size). In this case $V_1\leq{n}$ so we can continue in the same manner as in the proof of Theorem \ref{thm:general}, dividing the vertices into two classes and finding a matching. So $H$ must be connected.\\
Second, we show that $V(H)\geq{n+1}$. If $V(H)\leq{n}$, then by Theorem \ref{thm:zvuvonim} we can find a matching which avoids edges of $H$, and this contradicts the assumption that $H$ eliminates all matchings.\\
Thus, $H$ is a connected graph with $n$ edges and at least $n+1$ vertices, and therefore must be a tree.\\
\end{proof}
In particular we have $V(H)=n+1$ . Color each vertex of $H$ by red and the remaining $n-1$ points color blue. \\
We shall call a set of points $X\subseteq{V(\graphA)}$ \emph{matchable} if we can find a perfect matching on $X$ without using edges of $H$.

We shall now make some preparations towards the proof of the rest of the Lemma. We shall be using the following easy facts that we have put together in one observation:
\begin{observation}
Every set with the same number of red and blue points (according to our coloring) is matchable, according to Theorem \ref{thm:zvuvonim}. A set with an even number of points which has more blue points than red points is matchable. The union of disjoint convex regions such that the points in each region are matchable is itself matchable.
\end{observation}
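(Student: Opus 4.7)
The plan is to establish each of the three claims in the observation separately. The unifying tool is Theorem \ref{thm:zvuvonim}, and the unifying fact is that every edge of $H$ is a red--red edge, because by our coloring the red set is exactly $V(H)$.

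For the first claim, I would simply invoke Theorem \ref{thm:zvuvonim} on the given set $X$, viewed with its induced two-coloring. It produces a non-crossing perfect matching on $X$ in which each edge has one red and one blue endpoint; since every edge of $H$ connects two red points, such a matching cannot use an edge of $H$, hence $X$ is matchable.

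For the second claim, let $X$ have $r$ red and $b$ blue points with $r+b$ even and $b>r$. Then $b-r$ is a positive even integer. My plan is to relabel $(b-r)/2$ of the blue points as red in order to balance the coloring on $X$, and then apply the first claim. Each edge of the resulting matching joins a point now labeled red (either an original red or a relabeled blue) to an original blue; since every edge of $H$ has both endpoints in the original red set, no edge of this matching can belong to $H$.

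For the third claim, let $R_1,\dots,R_k$ be pairwise disjoint convex regions whose point sets are each matchable. I would pick a matching $M_i$ of the points in $R_i$ that avoids $H$, and form $M=\bigcup_i M_i$. Then $M$ is a perfect matching on the union of the point sets and contains no edge of $H$; it only remains to verify that $M$ is non-crossing. Each segment of $M_i$ lies inside the convex region $R_i$, so two segments belonging to distinct $M_i, M_j$ cannot meet because the regions themselves are disjoint, while segments inside a single $M_i$ do not cross by hypothesis. I do not anticipate a real obstacle here: each item follows from Theorem \ref{thm:zvuvonim} plus one short extra observation, and the only mildly geometric point is noting in the third item that convexity and disjointness of the $R_i$ together prevent inter-region crossings.
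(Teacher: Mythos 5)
Your proof is correct and is essentially the argument the paper intends: the paper states this observation without a formal proof, relying implicitly on exactly your three steps (bichromatic matching from Theorem~\ref{thm:zvuvonim} plus the fact that every edge of $H$ is red--red, recoloring surplus blue points to balance the set, and convexity plus disjointness to combine regional matchings). Nothing is missing.
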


Our next step is to show that each leaf of $H$ is connected to the boundary of the convex hull by an edge of $H$.
Let $v$ be a leaf vertex of $H$. Draw any line $l$ through $v$. Choose a direction on $l$. Let $D(l)$ be the difference between the number of red points and blue points on the right side of $l$ relative to the chosen direction. Suppose $D(l)=i$ . We start rotating the line with $v$ as center. Whenever $l$ meets some point of $\graphA$, $D(l)$ changes by $\pm{1}$ because the points are in general position. But when we get back to the original line $l$ with the opposite direction we have $D(l)=1-i$, because except $v$ we have $n-1$ blue points and $n$ red points. Thus at some line $l_0$ we have $D(l_0)=0$; we may assume that $l_0$ contains another point $u \neq v$ of $\graphA$, since we can always rotate the line $l_0$ to the left until it meets some new point. \\
We have two cases :\\
$(i).~$ $u$ is a red point.[see figure 1]\\
$(ii).~$ $u$ is a blue point. [see figure 2]
\begin{claim}
In case $(i)$, $u$ lies on the boundary of the convex hull.
\end{claim}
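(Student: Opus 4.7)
My plan is to argue the contrapositive: assume that $u$ is red and lies strictly in the interior of the convex hull of $V(\graphA)$, and derive a contradiction by exhibiting a perfect non-crossing matching of $\graphA$ avoiding $E(H)$.

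First I would exploit the hypothesis $D(l_0)=0$ directly. Since $l_0$ passes through the two red points $v$ and $u$, the remaining $n-1$ red and $n-1$ blue vertices split with equal red--blue counts on each open halfplane bounded by $l_0$. Applying \thref{thm:zvuvonim} on each halfplane produces a non-crossing perfect red--blue matching, and these edges are automatically disjoint from $E(H)$ because every edge of $H$ is red--red (recall $V(H)$ is exactly the red set). Therefore, if $vu\notin E(H)$ I can append the single edge $vu$ to the two halfplane matchings to get a non-crossing perfect matching of $\graphA$ missing $E(H)$, which is the desired contradiction.

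The remaining case is $vu\in E(H)$. Since $v$ is a leaf of $H$, this is the \emph{unique} $H$-edge incident to $v$, so every other red point $u'\neq u$ satisfies $vu'\notin E(H)$. My plan is to find a different balanced line $l_1$ through $v$ passing through a red point $u'\neq u$ with $D(l_1)=0$; then, applying the previous paragraph to $(l_1,u')$ in place of $(l_0,u)$, I would again obtain the contradicting matching. I would construct $l_1$ by continuing the rotation of $l$ around $v$ past $l_0$, tracking the $\pm 1$ jumps of $D$ until it returns to $0$ at a position through some new vertex $u'\in V(\graphA)$. The hypothesis that $u$ is interior is used here to guarantee that both open halfplanes bounded by $l_0$ are nonempty; combined with $r_R=b_R$ and $r_L=b_L$, each side in fact contains at least one red and at least one blue vertex, which prevents the $D$-walk from being pinned to one sign on either side of the half-rotation and forces another on-line zero of $D$ to occur.

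The hard part will be verifying that this second on-line zero of $D$ can be taken to occur at a \emph{red} point (a blue $u'$ would only land us in case (ii), not in the desired contradiction). I expect to settle this by a careful parity analysis of the $\pm 1$ jumps of $D$ across the half-rotation, using that two red points ($v$ and $u$) already sit on $l_0$ and that the total red and blue populations on each side are equal; interiorness of $u$ should rule out the pathological rotational patterns in which only blue endpoints would arise. This colour-tracking step is the principal technical obstacle, and it is precisely where the assumption that $u$ is interior will be fully exploited.
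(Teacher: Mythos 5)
Your opening reduction is exactly the paper's: if $uv\notin E(H)$ you match $u$ with $v$ and apply Theorem \ref{thm:zvuvonim} to the two balanced halfplanes, so one may assume $uv\in E(H)$. The problem is the main case. The existence of a second red point $u'\neq u$ for which the line $vu'$ is balanced is the entire content of your argument, and it is neither proved nor true in general. The rotation bookkeeping only tells you that $D$ must cross between $0$ and $1$ somewhere during a half-turn; it does not control the colours of the points at which on-line zeros of $D$ occur, and interiority of $u$ does not repair this. Concretely, take $n=3$ with $v$ at the origin, $u$ red at angle $90^\circ$ very close to $v$, and the remaining four points far from $v$ at angles $300^\circ$ (red $r_1$), $100^\circ$ (red $r_2$), $285^\circ$ (blue $b_1$), $200^\circ$ (blue $b_2$). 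Then $vu$ is balanced and $u$ lies in the interior of the convex hull, but the line $vr_1$ has both $u$ and $r_2$ on one side and both blue points on the other (so $D=\pm 2$), and the line $vr_2$ has only $b_2$ on one side (so $D=-1$): $u$ is the unique red point giving a balanced line through $v$. So the colour-tracking step you defer is not a technical obstacle but a genuine obstruction; no parity analysis of the jumps will supply the missing $u'$ from the hypotheses you invoke.

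The paper's proof of this case is local rather than rotational: assuming $u$ is interior, one picks $x\in RP(l_0)$ and $w\in LP(l_0)$ so that $xvwu$ is a non-convex quadrilateral with no points of the graph in the wedge at $u$ between the rays $ux$ and $uw$, and then runs a short case analysis. If $ux\notin E(H)$ one matches $ux$ and checks that $(RP(l_0)-\{x\})\cup\{v\}$ and $LP(l_0)$ are matchable (here the leaf property of $v$ lets it be treated as a blue point); symmetrically for $uw$; and if both $ux,uw\in E(H)$ then $xw\notin E(H)$ because $H$ is a tree, one matches $xw$, and $(RP(l_0)-\{x\})\cup\{v\}$ and $(LP(l_0)-\{w\})\cup\{u\}$ are matchable. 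Each branch produces a perfect matching avoiding $H$, giving the contradiction. To salvage your write-up, some argument of this kind --- one that actually exploits an empty region adjacent to the interior point $u$ and the acyclicity of $H$ --- must replace the unproven search for a second balanced red line.
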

\begin{proof}
In this case, on each side of $l_0$ there are the same number of red and blue points. If $uv$ is not an edge of $H$ then we can match $u$ and $v$, $LP(l_0)$ and $RP(l_0)$ are both matchable convex sets, and so we have a perfect matching. Thus we can assume $uv$ is an edge of $H$. We want to show that $u$ lies on the boundary of the convex hull. Assume this is not the case. Then we can find two points, $x \in RP(l_0)$ and $w \in LP(l_0)$,  such that $xvwu$ is a non-convex quadrilateral and there are no points which are both left of $ux$ and right of $uw$ [see figure 1]. If $ux$ is not an edge of $H$ then we can match $u$ and $x$, and thus $(RP(l_0)-\{x\})\cup\{v\}$ and $LP(l_0)$ are disjoint convex matchable sets which together with $ux$ form a perfect matching. Indeed we can add $v$ to the first set as a blue point since it is a leaf of $H$ and not connected to any point in this set. We can assume therefore that $ux$ is an edge of $H$. In the same way we may assume that $uw$ is an edge of $H$, but $H$ is a tree so $xw$ is not an edge of $H$ and therefore can be matched. But then $(RP(l_0)-\{x\})\cup\{v\}$ and $(LP(l_0)-\{w\})\cup\{u\}$ are disjoint convex matchable sets because we removed a red point from each set and added one point to each set. Thus we have a perfect matching in any case, proving by contradiction that $u$ lies on the boundary of the convex hull.
\end{proof}

Before we analyze case $(ii)$ we prove the next claim:
\begin{claim} \label{claim:case1}
If $v$ is on the boundary of the convex hull and $u$ is not, then only Case $(i)$ is possible.
\end{claim}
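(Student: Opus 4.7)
The plan is by contradiction. Assume Case (ii) holds --- so $u$ is blue --- together with the hypotheses that $v$ is a hull vertex and $u$ lies strictly interior to the hull. The goal is to exhibit a perfect non-crossing matching of $G$ avoiding $H$, contradicting the running assumption on $H$. First I record color counts: there are $n+1$ red vertices (exactly $V(H)$) and $n-1$ blue vertices. Since $v$ (red) and $u$ (blue) both lie on $l_0$, off the line there are $n$ reds and $n-2$ blues. The assumption $D(l_0)=0$ places $r$ reds and $r$ blues in $RP(l_0)$, while $LP(l_0)$ holds $n-r$ reds and $n-2-r$ blues --- a surplus of two reds. This imbalance is precisely why the direct approach ``match $uv$ and apply Theorem~\ref{thm:zvuvonim} to each half'' breaks down, and it drives the rest of the construction.

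I repair the imbalance using $v$'s hull position. Because $v$ is a hull vertex, general position lets me rotate $l_0$ about $v$ by an arbitrarily small angle without meeting any other vertex of $G$. I rotate so that $u$ moves into $LP$, producing a new line $l_0'$ through $v$ alone; then $RP(l_0')=RP(l_0)$ is still color-balanced and $LP(l_0')=LP(l_0)\cup\{u\}$ has $n-r$ reds and $n-1-r$ blues, an excess of only one red, with an odd total of $2n-2r-1$ points. Consequently any matching of $G$ must pair $v$ with a vertex of $LP(l_0')$.

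The heart of the argument is a second rotating-line step inside $LP(l_0')$. Let $v^*$ denote the unique $H$-neighbor of $v$ (it exists because $v$ is a leaf). Sweep a line $m$ about $v$ through the left half-plane and track $D^{LP}(m)$, the excess of reds over blues among points of $LP(l_0')$ strictly right of $m$. At one extremal position of $m$ (lying along $l_0'$ with $LP$ on the right) one has $D^{LP}=1$; at the opposite extremal position $D^{LP}=0$. The function changes by $-1$ at each red crossing and by $+1$ at each blue crossing, so the walk from $1$ to $0$ must contain at least one $-1$ step taken from height $1$. At that instant $m$ passes through a red point $r_1\in LP(l_0')$ with the right side color-balanced, and so $vr_1$ splits $LP(l_0')\setminus\{r_1\}$ into two color-balanced convex wedges. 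When $r_1\neq v^*$, the edge $vr_1$ is red--red but not in $H$; combining $vr_1$ with Theorem~\ref{thm:zvuvonim} applied to each wedge of $LP(l_0')\setminus\{r_1\}$ and to the balanced $RP(l_0')$ gives a perfect non-crossing matching of $G-H$, the desired contradiction. Non-crossing is automatic: each Ham-Sandwich matching lives in its own convex wedge, and $vr_1$ lies along $m$, on the common boundary of the two $LP$-wedges.

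The main obstacle is the edge case $r_1=v^*$: the $1\to 0$ red transition in the walk might occur only at $v^*$. To handle this I plan to either (a) exhibit a second $1\to 0$ red transition --- using the interior position of $u$ to argue that $LP(l_0')$ wraps around $u$ with enough red hull structure to force at least two such transitions --- or (b) rotate $l_0$ in the opposite direction in the first step, sending $u$ into $RP$, and repeat the walk argument after re-choosing how $v$ is paired. Establishing that at least one of these alternatives succeeds, making essential use of both hypotheses on $v$ and $u$, is the analog for Case~(ii) of the non-convex quadrilateral exchange that handled Case~(i), and I expect it to be the most delicate step.
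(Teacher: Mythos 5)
There is a genuine gap, and it sits exactly where you placed it: the case $r_1=v^*$. Everything you actually carry out (the perturbation of $l_0$, the second sweep, the two balanced wedges, the matching when $r_1\neq v^*$) never uses the hypotheses that $v$ is a hull vertex and that $u$ is interior; if the argument closed without those hypotheses it would show that Case $(ii)$ can never occur at all, which is false --- the paper's very next claim shows Case $(ii)$ does occur, only with $u$ on the hull. So the entire content of the statement is concentrated in the deferred case, and neither of your escape routes is established. Route (a) fails in general: a $\pm 1$ walk from $1$ to $0$ need only cross the level between $1$ and $0$ once (it can stay at height at least $1$ until its final step), so there may be a unique balanced direction inside $LP(l_0')$ and it may well point at $v^*$. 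Route (b) changes the parity pattern unfavourably: pushing $u$ into $RP$ makes $RP(l_0)\cup\{u,v\}$ balanced but leaves $LP(l_0)$ with a surplus of \emph{two red} points, and Theorem~\ref{thm:zvuvonim} (and the Observation) only handle a blue surplus; matching off the two extra reds requires structural information about $H$ restricted to $LP(l_0)$ which you have not developed.

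The paper's proof is much shorter and genuinely different; note also that in the paper $u$ denotes the $H$-neighbor of the leaf $v$, and the conclusion is existential: under the hypothesis that this neighbor is off the hull, \emph{some} position of the rotating line realizes Case $(i)$ (the contradiction is then extracted by the preceding claim, which says the red point found in Case $(i)$ is that neighbor and lies on the hull). Concretely: if a hull-neighbor $v'$ of $v$ were red, then either $vv'\in H$, so the $H$-neighbor of $v$ is on the hull contrary to hypothesis, or $vv'\notin H$ and $vv'$ is a hull edge, so matching it and applying Theorem~\ref{thm:zvuvonim} to the remaining $n-1$ red and $n-1$ blue points yields a perfect matching; hence both hull-neighbors of $v$ are blue. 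Then a single sweep of $l$ about $v$, from ``all points on the left'' to ``all points on the right'', has $D=-1$ after the first (blue) crossing and $D=2$ just before the last (blue) crossing, so somewhere $D$ steps from $0$ to $1$ at a red point: that is Case $(i)$. You would do better to adopt this parity argument than to try to repair the $r_1=v^*$ case.
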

\begin{proof}
First note that the points on the boundary of the convex hull next to $v$ must be blue. Otherwise if $v'$ is some red point next to $v$ in the boundary of the convex hull, we have two possibilities: Either $vv'$ is an edge of $H$, or $vv'$ not an edge of $H$. In the former possibility $v'=u$ so $u$ is on the boundary of the convex hull contrary to assumption. In the latter possibility  we can match $vv'$, and the remaining $n-1$ red and $n-1$ blue points are matchable, so we have found a perfect matching contrary to assumption. Thus we conclude that $v'$ must be blue.

As before we start rotating $l$ with $v$ as the center. We can start from a position where all the points are to the left of $l$. Rotating $l$ to the left we first meet a blue point so we start with $D(l)=-1$, and we end with a blue point so one step before that we have $D(l)=2$.  Thus on the way we must move one time from 0 to 1, meaning that we have at some point an equal number of red and blue points right of $l$ and then we meet a new red point; this is exactly case $(i)$.
\end{proof}

Now we can analyze case $(ii)$:
\begin{claim}
In case $(ii)$, $u$ lies on the boundary of the convex hull.
\end{claim}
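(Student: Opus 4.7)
The plan is to argue by contradiction: assume $u$ lies strictly inside the convex hull of $\graphA$, and construct a perfect non-crossing matching of $\graphA$ avoiding every edge of $H$, contradicting the hypothesis on $H$. The argument will mirror that of the previous claim (case $(i)$), using two features peculiar to case $(ii)$. Since $u$ is blue we have $u\notin V(H)$, so every edge incident to $u$ is automatically absent from $H$. Moreover $v$ is a leaf of $H$ with unique $H$-neighbour $v^\ast$, so every edge $vz$ with $z\neq v^\ast$ is not in $H$.

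First I will record the parity on each side of $l_0$. Since $D(l_0)=0$ and $u$ lies on $l_0$ (uncounted in the colour count on either strict side), the right half-plane $RP(l_0)$ satisfies $r_R=r_B$ and is matchable by Theorem~\ref{thm:zvuvonim}, whereas the left half-plane $LP(l_0)$ satisfies $l_R=l_B+2$, carrying two excess red points. If $r_R=r_B=0$ then every other vertex of $\graphA$ lies in $LP(l_0)$, forcing $u$ onto the convex hull boundary and yielding the desired contradiction immediately; so I may assume $r_R\geq 1$, and by the same token $LP(l_0)$ is nonempty. Next, exactly as in case $(i)$, I will use the interiority of $u$ to pick $x\in RP(l_0)$ and $w\in LP(l_0)$ so that the quadrilateral $xvwu$ is non-convex at $u$ and the open pocket at $u$ on the side of $l_0$ opposite to $v$, bounded by segments $ux$ and $uw$, contains no other vertex of $\graphA$.

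In each subcase of the colours of $x$ and $w$ and of the $H$-membership of the candidate edges, the plan is to build the matching from two cross edges across $l_0$---one incident to $u$ and one incident to $v$, for instance $uw$ paired with $vx$, or $uv$ paired with a red-red edge $wx$---together with bichromatic matchings of the residual sets on each side provided by Theorem~\ref{thm:zvuvonim}. The empty-pocket geometry at $u$ will guarantee that the two cross edges and the internal matchings are pairwise non-crossing, just as in case $(i)$. The main obstacle is the parity mismatch on $LP(l_0)$: unlike case $(i)$, one cross edge does not suffice, so I will need two cross edges that are both $H$-free. This is precisely where the two features of case $(ii)$ pay off: $u\notin V(H)$ makes any $u$-incident candidate automatically $H$-free, and the leaf property of $v$ makes $vz\notin H$ for every $z$ except the single vertex $v^\ast$, so at worst one must reroute through the alternative cross edge in the degenerate event that $x$ or $w$ equals $v^\ast$. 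Verifying this subcase by subcase will produce a non-crossing perfect matching of $\graphA$ avoiding $H$, contradicting the assumption and completing the proof of the claim.
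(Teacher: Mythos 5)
Your reading of the claim is too literal, and this leads you both to prove the wrong statement and to propose a construction that does not work. In case $(ii)$ the point $u$ is blue, hence not a vertex of $H$ at all; what the paper actually establishes here (and what is needed for Claim~\ref{clm:leaf}) is that the $H$-neighbour $w$ of the leaf $v$ lies on the boundary of the convex hull. The paper's proof first disposes of the subcase where the neighbour of $v$ lies in $RP(l_0)$ (then $RP(l_0)$ and $LP(l_0)\cup\{u,v\}$ are already two matchable convex sets), then applies Claim~\ref{claim:case1} to the sub-configuration $LP(l_0)\cup\{u,v\}$, in which $v$ is a hull point, to place $w$ on the boundary of that smaller hull, and finally, if $w$ is interior to the original hull, matches $w$ across $l_0$ to a point $x$ on the hull of $RP(l_0)$ and matches $(RP(l_0)-\{x\})\cup\{v\}$ and $(LP(l_0)-\{w\})\cup\{u\}$ internally. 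None of this appears in your outline; in particular you never invoke Claim~\ref{claim:case1}, which is the engine of the argument, and your empty-pocket quadrilateral at the blue point $u$ plays no role in the paper's proof.

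Moreover, the decompositions you sketch fail on parity. Since $D(l_0)=0$, writing $r$ for the common number of red and blue points in $RP(l_0)$ and $l_B$ for the number of blue points in $LP(l_0)$, one has $|RP(l_0)|=2r$ and $|LP(l_0)|=2l_B+2$, both even. Taking the two cross edges $uw$ and $vx$ (or the pair $uv$ and $wx$) removes exactly one point from each open half-plane, leaving $LP(l_0)-\{w\}$ and $RP(l_0)-\{x\}$ of odd cardinality; neither can then be perfectly matched internally, and no appeal to Theorem~\ref{thm:zvuvonim} repairs an odd set. Any correct decomposition must delete an even number of points from each side of $l_0$. The paper's choice does precisely this: the single cross edge is $wx$, and the two collinear points $u$ and $v$ are absorbed one into each side, keeping both sets even and colour-balanced (with $v$ counted as blue on the side not containing its unique $H$-neighbour). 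This absorption step, rather than a second cross edge, is the idea your proposal is missing.
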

\begin{proof}
In this case, in $RP(l_0)$ there are the same number of red and blue points, and in $LP(l_0)$ there are two more red points than blue points. If the neighbor of $v$ in $H$ is in $RP(l_0)$  then both $RP(l_0)$ and $LP(l_0)\cup\{u,v\}$ are matchable convex sets so we have a perfect matching. Indeed, we add two blue points to the left side because $v$ is connected only to the right side. Thus we can assume that the neighbor of $v$ lies in $LP(l_0)$. Now we look at  $LP(l_0)\cup\{u,v\}$. It is a set with two more red points than blue points. $v$ is on the boundary of the convex hull of this set so by the claim, the neighbor of $v$ in $H$ must be on the boundary of the convex hull of $LP(l_0)\cup\{u,v\}$. Call this point $w$. If $w$ is on the boundary of the original convex hull the claim follows. Otherwise, we have the case shown in [figure 2]. From the figure it is clear that $w$ can be matched to some point $x$ belonging to the convex hull 
of $RP(l_0)$, and thus $(RP(l_0)-\{x\})\cup\{v\}$ and $(LP(l_0)-\{w\})\cup\{u\}$ are both matchable disjoint convex sets so we have a perfect matching (Note that we removed a red point from the left side and added a blue one so indeed we have a matchable set).
\end{proof}
From both lemmas we obtain the following claim:
\begin{claim}\label{clm:leaf}
The neighbor of every leaf in $H$ lies on the boundary of the convex hull.
\end{claim}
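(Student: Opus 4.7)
The plan is to observe that Claim \ref{clm:leaf} is essentially a direct consequence of the two preceding case analyses, so the proof amounts to invoking them and then checking that in both cases it is the $H$-neighbor of the leaf (not merely the point $u$ produced by the rotation) that is forced onto the convex hull boundary. Fix a leaf $v$ of $H$, and rerun the rotation argument set up just before Case (i): draw any line $l$ through $v$, orient it, and rotate $l$ around $v$. Since $D(l)$ flips from $i$ to $1-i$ over a half-rotation and changes by $\pm 1$ each time the rotating line sweeps through a vertex of $\graphA$, there is some position $l_0$ where $D(l_0)=0$, and by a small further rotation we may assume $l_0$ contains a second vertex $u\neq v$. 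This places $v$ into exactly one of Case (i) or Case (ii).

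In Case (i), where $u$ is red, the proof of that case in fact established two things: that $uv$ must be an edge of $H$ (otherwise the matching $u \leftrightarrow v$ together with Ham-Sandwich matchings on $LP(l_0)$ and $RP(l_0)$ would give a perfect matching in $\graphA-H$), and that $u$ lies on the boundary of the convex hull of $\graphA$. Since $v$ is a leaf, $u$ is its unique $H$-neighbor, so the claim holds in this case.

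In Case (ii), where $u$ is blue, the proof of that case showed first that the $H$-neighbor of $v$ cannot lie in $RP(l_0)$ (else the pair $\{u,v\}$ could be pushed to the left side and both halves matched), so this neighbor $w$ lies in $LP(l_0)$. Applying Claim \ref{claim:case1} to the subconfiguration with vertex set $LP(l_0)\cup\{u,v\}$ — in which $v$ is on the boundary of the convex hull and its $H$-neighbor is $w$ — and then running the matching construction shown in the Case (ii) proof, one forces $w$ to lie on the boundary of the \emph{original} convex hull of $\graphA$, lest a perfect matching in $\graphA-H$ be produced.

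Since every leaf of $H$ falls into exactly one of these two cases, and in each case the unique $H$-neighbor of the leaf has been shown to lie on the boundary of the convex hull of $\graphA$, the claim is proved. There is no real obstacle here beyond correctly extracting from each of the two previous claims the stronger conclusion about the leaf's neighbor rather than about the auxiliary vertex $u$; the genuine geometric content has already been done.
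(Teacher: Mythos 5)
Your proposal is correct and takes essentially the same route as the paper, which likewise obtains Claim \ref{clm:leaf} by simply combining the Case (i) and Case (ii) analyses (the paper states only ``from both lemmas we obtain the following claim'' with no further argument). Your added observation --- that in Case (i) the forced edge $uv$ makes $u$ the leaf's unique $H$-neighbor, while in Case (ii) the conclusion really concerns the neighbor $w$ rather than the auxiliary blue point $u$ --- is exactly the right reading of those two preceding claims.
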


The following claim will be used in the proof of part $(c)$ of the lemma.
\begin{claim} \label{clm:hezi}
Let $e=uv$ be an edge in $H$ such that $v$ is of degree at least $2$ and lies on the boundary of the convex hull. Then on each side of the line extending $e$, the number of red points equals the number of blue points. [see figure 3]
\end{claim}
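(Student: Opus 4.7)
The plan is to argue by contradiction, extending the rotation-and-case-analysis template of the earlier leaf-case claims (in particular \clref{claim:case1}) to the setting where $v$ has degree at least $2$. Suppose $\ell_e$ does not balance red and blue; by symmetry assume $D(\ell_e)>0$, where $D(\ell):=\#\{\text{red in } RP(\ell)\}-\#\{\text{blue in } RP(\ell)\}$. Rotate a directed line $\ell_\theta$ about $v$ starting from $\ell_e$. Between critical angles $D(\ell_\theta)$ is constant, at each critical angle (when $\ell_\theta$ passes through $v$ and one other point) it changes by $\pm 1$, and after a half-rotation we have $D(\ell_{\theta+\pi})=-D(\ell_\theta)$. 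A discrete intermediate-value argument thus yields a critical angle $\theta_0$ at which $\ell_0$ passes through $v$ and a second point $u_0$ with red/blue counts equal on each side of $\ell_0$.

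I would then case-analyze on the color of $u_0$, mirroring the leaf proofs. If $u_0$ is red and $vu_0\notin H$, match $v$--$u_0$ and apply \thref{thm:zvuvonim} to each balanced side; the resulting matching avoids $H$ (since ham-sandwich edges are red--blue while $H$-edges are red--red), contradicting that $H$ eliminates all matchings. If $u_0$ is red and $vu_0\in H$, then $u_0\ne u$ (else $\ell_0=\ell_e$), so $u_0$ is an $H$-neighbor of $v$ distinct from $u$; I replicate the non-convex quadrilateral construction from the proof of the case-(i) claim, noting that the other neighbor $u$ sits on $\ell_e$ and hence off $\ell_0$, so it does not interfere with the matchability of the augmented sides. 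If $u_0$ is blue, I follow the case-(ii) argument: $RP(\ell_0)$ is red--blue balanced while $LP(\ell_0)$ has two excess reds, and provided all $H$-neighbors of $v$ lie in one side of $\ell_0$, the set on the opposite side together with $\{u_0,v\}$ becomes matchable by treating $v$ as effectively blue, once again producing a forbidden matching.

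The main obstacle is the subcase in which $u_0$ is blue and $v$ has $H$-neighbors on \emph{both} sides of $\ell_0$: this cannot happen for a leaf, but is genuinely possible here since $\deg_H(v)\ge 2$, and the ``effectively blue'' device fails on both sides. My plan for this subcase is to exploit the hull-boundary hypothesis on $v$: since $v$ is a vertex of the convex hull, every other point of $G$ lies in a fixed open half-plane through $v$, so the $H$-neighbors of $v$ occupy a proper angular arc of length strictly less than $\pi$ around $v$. By initiating the rotation from a direction along the supporting hull edge at $v$ and selecting the \emph{first} critical angle at which $D$ hits $0$, one should force all of $v$'s $H$-neighbors to still lie on one side of $\ell_0$, recovering the effectively-blue argument. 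If a particular configuration resists this refinement, I would either continue rotating past $u_0$ to the next zero-crossing of $D$, or run the symmetric rotation around $u$ in place of $v$, again using the hull-boundary position to land in the leaf-style setting and close out the contradiction.
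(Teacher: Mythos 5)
There is a genuine gap. You transplant the leaf-case template --- rotate about $v$ to a line $\ell_0$ with $D(\ell_0)=0$ and case on the colour of the second point $u_0$ --- but for this claim that template cannot close. Balanced lines through $v$ exist whether or not $\ell_e$ is balanced (that is exactly how the leaf argument begins), so the existence of $\ell_0$ carries no information; the contradiction must come out of the cases themselves, and in the case where $u_0$ is red, $vu_0\in H$, and $u_0$ lies on the hull boundary (for instance $u_0$ a hull-neighbour of $v$, which is necessarily red and $H$-joined to $v$) there is nothing to contradict: the non-convex-quadrilateral construction you invoke only applies when $u_0$ is \emph{interior}, and in the leaf claims its conclusion is the structural statement that $u_0$ is on the hull, not a forbidden matching. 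Indeed, if every one of your cases produced a matching you would have shown that no $n$-edge $H$ eliminates all matchings, contradicting Example~1. You also correctly identify the blue-$u_0$ subcase with $H$-neighbours of $v$ on both sides as the hard one, but the proposed repairs (``take the first zero-crossing'', ``rotate about $u$ instead'') are left speculative and are not a proof. A minor point: with $n+1$ red and $n-1$ blue points, a half-rotation about the red point $v$ gives $D(\ell_{\theta+\pi})=1-D(\ell_\theta)$, not $-D(\ell_\theta)$; the intermediate-value step survives, but the bookkeeping matters later.

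The paper's proof avoids all of this by not aiming at $D=0$. It first shows that the hull-neighbours of $v$ are red (if one were blue, matching it to $v$ removes at least two edges of $H$ and reduces to $2n-2$ points with at most $n-2$ forbidden edges, where Theorem~\ref{thm:general} applies --- this is where $\deg_H(v)\ge 2$ first enters). Then, assuming the right of $\ell_e$ has blue excess, it rotates from the direction of $e$ toward the adjacent hull edge, where $D=0$, and stops at a line $l_0$ with $D(l_0)=-1$. Now $RP(l_0)\cup\{v\}$ is colour-balanced, hence matchable by a red--blue matching (Theorem~\ref{thm:zvuvonim}), while $LP(l_0)$ has a red excess of two but contains at least two $H$-neighbours of $v$ (namely $u$ and the hull-neighbour), so deleting $v$ disconnects the red tree inside $LP(l_0)$ and the component-splitting argument of Theorem~\ref{thm:general} matches that side as well. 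The degree hypothesis is thus exploited to \emph{disconnect} the heavy side's red graph, not via an ``effectively blue'' device; your proposal is missing both this idea and the preliminary step about the hull-neighbours of $v$.
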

This would prove part $(c)$ if we knew that all interior vertices of $H$ are of degree $1$, i.e. if we knew $(b)$.
\begin{proof}
First note that the neighbors of $v$ on the boundary of the convex hull are red also, otherwise we can match one of them with $v$ and reduce the problem to $2n-2$ points and $n-2$ edges removed, and by Theorem \ref{thm:general} we can find a perfect matching in this reduced graph. \\
We want to show that on each side of $e$ there are equal numbers of red and blue points. If this is not the case, assume, $\wlogg$, that to the right of $e$ there are more blue points.
Take a line $l$ and start rotating it with $v$ as the center of rotation. We start with the $uv$ direction and rotate to the right side. In the beginning $D(l)<0 $ and at the end (when $l$ goes through both $v$ and some adjacent boundary point) we have $D(l)=0$. since this quantity changes only by $1$ or $-1$ at each stage, we conclude that for some line $l_0$ we have $ D(l_0)=-1 $  Now $RP(l_0)\cup\{v\}$ is a matchable set with the same number of red and blue points, and $LP(l_0)$ is also a matchable set because in $LP(l_0)$ we have two more red points than blue ones.  But $v$ was joined to $2$ points from $LP(l_0)$, namely $u$ and the neighbor on the boundary of the convex hull [see figure 3]. Initially $(H\cap{LP(l_0)})\cup\{v\}$ is a tree, so if we remove $v$ (which is of degree at least $2$), the "red-graph" $(H\cap{LP(l_0)})$ is not connected and has two more red points than blue points; hence by part $a$ of our proposition there must be a matching of $LP(l_0)$. Together with $RP(l_0)\cup\{v\}$ we thus obtain a perfect matching. By contradiction, we conclude that on each side of $e$ there is an equal number of red and blue points. \\
\end{proof}
\begin{claim}
If $v$ is on the boundary of the convex hull, $u$ is of degree at least $2$ in $H$, and $vu$ is an edge of $H$, then $u$ must also lie on the boundary of the convex hull.
\end{claim}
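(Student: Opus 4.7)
The plan is to argue by contradiction. Assume $u$ lies in the interior of the convex hull; I will build a non-crossing perfect matching of $\graphA - H$, contradicting the standing hypothesis of \lemref{lem:threeparts}. First I split on $\deg_H(v)$. If $v$ is a leaf of $H$, then $u$ is its unique $H$-neighbor, and \clref{clm:leaf} immediately places $u$ on the boundary, giving the contradiction. So from now on I may assume $\deg_H(v)\geq 2$; then \clref{clm:hezi} applied to the edge $uv$ yields that the line $l_0$ through $u$ and $v$ contains equal numbers of red and blue vertices on each side.

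Next I will mimic the geometric construction of case $(i)$ of the leaf analysis. Because $u$ is strictly interior, I can choose $x\in RP(l_0)$ and $w\in LP(l_0)$ so that $xvwu$ is a non-convex quadrilateral with $u$ as the reflex vertex and no vertex of $\graphA$ lies in the wedge at $u$ between the rays $ux$ and $uw$ on the side opposite to $v$. I then case-split on which of $ux,uw$ belong to $H$: if $ux\notin H$, match the pair $ux$ and then match $(RP(l_0)-\{x\})\cup\{v\}$ and $LP(l_0)$ separately; the move with $uw\notin H$ is symmetric; if both $ux,uw\in H$, then the tree property of $H$ forces $xw\notin H$, so match $xw$ and then match $(RP(l_0)-\{x\})\cup\{v\}$ and $(LP(l_0)-\{w\})\cup\{u\}$ separately. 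In each sub-case the two modified half-plane sets have equal numbers of red and blue vertices---the vertex $v$, and in the third sub-case also $u$, is inserted into exactly the side from which a red vertex was just removed---so \thref{thm:zvuvonim} produces a non-crossing red--blue matching in each, and the angular-neighbor choice of $x,w$ guarantees that the three pieces (the matchings in the two sides and the single bridge edge $ux$, $uw$, or $xw$) do not cross one another, exactly as in case $(i)$.

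The step I expect to be the main subtlety is precisely the one that differs from the leaf version: handling $v$'s contribution to a half-plane now that $v$ is no longer a leaf. In case $(i)$ one added $v$ as a ``blue'' point to the half-plane---legal because the leaf $v$ had no $H$-neighbor inside the set---and then invoked the observation that a set with more blue than red vertices is matchable. Here that shortcut is unavailable, because some red $H$-neighbors of $v$ may lie inside $(RP(l_0)-\{x\})\cup\{v\}$, and a carelessly chosen non-crossing matching could try to pair $v$ with one of them. The remedy is to notice that after appending $v$ the set is already \emph{balanced} in red and blue, so \thref{thm:zvuvonim} applies and yields a red--blue matching, which automatically avoids every red--red edge of $H$; in particular $v$ is matched to a blue vertex, never to one of its $H$-neighbors, and the same remark handles $u$ in the third sub-case. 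This closes the case analysis, produces a perfect matching in $\graphA-H$, and therefore contradicts the assumption that $u$ is interior.
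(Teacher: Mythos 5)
Your reduction to the case $\deg_H(v)\geq 2$ via \clref{clm:leaf}, your use of \clref{clm:hezi} to balance the two sides of $l_0$, and your choice of $x,w$ all match the paper. The sub-case where both $ux$ and $uw$ lie in $H$ is also handled correctly and identically to the paper's case $(iii)$: there $x$ and $w$ are forced to be red, so swapping a red point for a red point on each side preserves balance and \thref{thm:zvuvonim} applies. But there is a genuine gap in your first two sub-cases. When $ux\notin H$ you match $ux$ and assert that $(RP(l_0)-\{x\})\cup\{v\}$ is balanced because ``$v$ is inserted into exactly the side from which a red vertex was just removed.'' That is false: $x$ is chosen purely geometrically (as the angular neighbour making $xvwu$ non-convex) and need not be red. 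If $x$ is blue, the set $(RP(l_0)-\{x\})\cup\{v\}$ has \emph{two more red points than blue points}, \thref{thm:zvuvonim} does not apply, and such a set is not automatically matchable --- a forced red--red edge of the matching could lie in $H$. Your ``remedy'' paragraph, which you flag as the main subtlety, is therefore built on a premise that fails exactly where the difficulty lives.

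The paper's proof is organized differently precisely to get around this: it splits into \emph{five} cases keyed to where $u$'s other $H$-neighbours lie (in $RP(l_0)$, in $LP(l_0)$, or both), not merely to whether $ux,uw\in H$. In the unbalanced situations it does not invoke \thref{thm:zvuvonim} at all; instead it argues that the restriction of $H$ to the offending set is disconnected (because $u$, the vertex joining the pieces of the tree, has been removed from the set), and then matches the set by the component-partition argument from the proof of \thref{thm:general}, exactly as in the proof of \clref{clm:hezi}. Moreover, in the cases where $u$ has neighbours on only one side (the paper's $(iv)$ and $(v)$), the paper deliberately prefers to match $xw$ rather than $ux$ when possible, so that the removed vertices are red and balance is preserved; your uniform rule ``if $ux\notin H$, match $ux$'' forgoes this flexibility and cannot be repaired without reintroducing the paper's finer case analysis. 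To fix your argument you would need either to restore the dependence on the location of $u$'s neighbours together with the disconnectedness argument, or to justify matchability of a set with a two-point red surplus by some other means.
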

\begin{proof}
Assume $u$ is not on the boundary of the convex hull. We saw that every leaf is connected to the boundary of the convex hull so $v$ must be of degree at least $2$. Denote by $l_0$ the line through $u$ and $v$. By the previous claim, on each side of $l_0$ there is an equal number of red and blue points. Because $u$ is an interior point we can find two points - $x \in RP(l_0)$ and $w \in LP(l_0)$, such that $xvwu$ is a non-convex quadrilateral and there are no points which are on the left of $ux$ and on the right of $uw$ [see figure 1].\\
We analyze several cases:\\
$(i).~$ $ux$ is not an edge of $H$ and there is a neighbor (in $H$) of $u$ in $RP(l_0)$. In this case we can match $u$ and $x$. Of course $LP(l_0)$ is a matchable set and so is $(RP(l_0)-\{x\})\cup\{v\}$ because we have at most two more red points than blue points, but the graph of $H$ in this set is not connected so it is matchable by the same argument as in the proof of the previous claim. Thus we have a perfect matching, and hence a contradiction.
$(ii).~$ $uw$ is not an edge of $H$ and there is a neighbor of $u$ (with respect to $H$) in $LP(l_0)$. We proceed as in case $(i)$.\\
$(iii).~$  $u$ has a neighbor in $LP(l_0)$ and also in $RP(l_0)$, $uw$ and $ux$ are both edges of $H$. In this case we match $x$ and $w$. This is possible because $H$ has no cycle. Then $(RP(l_0)-\{x\})\cup\{v\}$ and $(LP(l_0)-\{w\})\cup\{u\}$ are matchable sets and we have a perfect matching.  Again we have reached a contradiction.\\
$(iv).~$ $u$ has a neighbor in $LP(l_0)$ and not in $RP(l_0)$.  $uw$ is an edge and $ux$ is not. If $xw$ is not an edge of $H$ we match $x$ and $w$, so $(LP(l_0)-\{x\})\cup\{v\}$ is a matchable set and $(RP(l_0)-\{w\})\cup\{u\}$ is also matchable because $w$ is a red point. If $xw$ is an edge, $ux$ must be not be, so we can match $u$ and $x$ and because now $x$ is a red point we have that $(RP(l_0)-\{x\})\cup\{v\}$ and $LP(l_0)$ are both matchable. So in every case we have a perfect matching, and again we have reached a contradiction.\\
$(v).~$ $u$ has a neighbor in $RP(l_0)$ and not in $LP(l_0)$ ($ux$ is an edge and $uw$ is not, with respect to $H$). This case is similar to case $(iv)$.

Now, because $u$ is of degree at least $2$, one of these cases must occur, but we have shown that each case leads to a contradiction.  Thus our initial assumption that $u$ is not on the boundary of the convex hull is impossible, and the claim is proved.
\end{proof}
Now we have everything we need for deducing parts $(b),(c)$ and thus finishing the proof of Lemma \ref{lem:threeparts}.
\begin{proof}
\emph{[Lemma \ref{lem:threeparts}, parts $(b),(c)$]}
We know from Claim \ref{clm:leaf} that every leaf has a neighbor in the boundary of the convex hull. In addition we know from the last claim that there are no interior points which are not leaves that are connected to the boundary of the convex hull. Thus every vertex of degree at least $2$ in $H$ must lie on the boundary of the convex hull, and this proves part $(b)$.\\
Part $(c)$ of the lemma follows by Claim \ref{clm:hezi} (for boundary edges in $\graphA$ which are edges of $H$, the condition applies trivially), together with part $(b)$, as every edge in $H$ has at least one vertex on the boundary of the convex hull.
\end{proof}

We have characterized the family of graphs with $n$ edges whose removal can eliminate all matchings. Does every such graph eliminate all matchings? In the convex case the answer is yes (see \cite{KE}). In the general case it seems that some additional conditions need to be added.

Now we consider the case where the boundary of the convex hull contains at most $n+1$ points.We shall prove the theorem by induction.\\
As a basis for induction we will first check the case $n=2$.
In this case we have a non-convex quadrilateral and it is easy to see that we have $3$ disjoint matchings, so these cannot be eliminated by removing only $2$ edges.  Thus Theorem \ref{thm:hGc1isn} applies in this case. \\
We prove three claims. In all of them we assume that we are given $2n$ points in general position whose convex hull's boundary contains at most $n+1$ of them. We assume that $H$ is a graph of $n$ edges that eliminates all matchings, that is, there is no perfect matching in $\graphA-H$. We already know that $H$ has the structure of Lemma \ref{lem:threeparts}.
\begin{claim}
If $H$ contains an edge between two points on the boundary of the convex hull, it must be a boundary edge.
\end{claim}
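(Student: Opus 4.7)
Assume for contradiction that $uv \in E(H)$ is a chord of the convex hull -- that is, both $u$ and $v$ lie on the boundary but $uv$ is not a boundary edge. The plan is to construct a non-crossing perfect matching of $G-H$, contradicting the hypothesis that $H$ eliminates all matchings. By Lemma~\ref{lem:threeparts}(c) applied to $e=uv$, the line $\ell_{uv}$ is balanced, so Theorem~\ref{thm:zvuvonim} produces red-blue non-crossing matchings on each of $LP(\ell_{uv})$ and $RP(\ell_{uv})$; these matchings cannot use any edge of $H$, since $H$ has only red vertices. It therefore remains to pair $u$ and $v$ with two other vertices, via non-$H$ edges, in a non-crossing way.

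Since $uv$ is a chord, each of the two hull-arcs between $u$ and $v$ contains at least one further vertex; let $u^{*}$ be the hull-neighbor of $u$ on one such arc. The boundary segment $uu^{*}$ crosses no interior edge. The generic case is the one in which pairing $u$ with $u^{*}$ is both legal and effective: if $u^{*}$ is blue then $uu^{*}\notin E(H)$ automatically, and if $u^{*}$ is red with $uu^{*}\notin E(H)$ the pairing is also legal. In either situation, the restriction $H'$ of $H$ to the remaining $2n-2$ vertices satisfies $|E(H')|=n-\deg_H(u)-\deg_H(u^{*})$, which is at most $n-2$ whenever $\deg_H(u)\geq 2$ (the blue sub-case) or $u^{*}\in V(H)$ (the red sub-case). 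Theorem~\ref{thm:general} then produces a non-crossing matching of the remaining $2n-2$ vertices avoiding $H'$, and combining it with the boundary edge $uu^{*}$ gives a matching of $G-H$, a contradiction.

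The delicate sub-cases are (a) $u^{*}$ is blue and $u$ is a leaf of $H$ (so its only $H$-neighbor is $v$), and (b) $uu^{*}\in E(H)$. In sub-case (a), I would repeat the pairing argument at $v$ using its hull-neighbor $v^{*}$ on the opposite arc, ensuring that the boundary edges $uu^{*}$ and $vv^{*}$ are disjoint. Since $u$ and $v$ cannot both be leaves of $H$ -- otherwise $H=\{uv\}$, contradicting $|V(H)|=n+1\geq 3$ -- we have $\deg_H(v)\geq 2$, and the degree bookkeeping after the two boundary pairings reduces the number of $H$-edges on the $2n-4$ remaining vertices sufficiently for Theorem~\ref{thm:general} to produce the desired matching. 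In sub-case (b), Lemma~\ref{lem:threeparts}(c) applied to the edge $uu^{*}$ provides a second balance line through $u$; combined with the analogous analysis on the other hull-neighbor $u^{**}$ of $u$ and a rotating-line argument around $u$ in the spirit of the proof of Claim~\ref{claim:case1}, this isolates a line through $u$ splitting the remaining points into two convex, matchable halves.

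The main obstacle will be the leaf sub-case (a): removing a single leaf of $H$ decreases $|E(H)|$ by only one, so Theorem~\ref{thm:general} is not directly strong enough after only one boundary pairing. Overcoming it requires the simultaneous pairings at both $u$ and $v$ on opposite arcs, combined with the balance of Lemma~\ref{lem:threeparts}(c) along both $\ell_{uv}$ and a second line incident to $v$, to bound the number of $H$-edges on the $2n-4$ remaining points.
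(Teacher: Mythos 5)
Your overall strategy --- pair the endpoints of the alleged chord off via boundary edges and finish with Theorem~\ref{thm:general} on the remaining points --- is genuinely different from the paper's, and it does handle the ``generic'' case correctly. But the two sub-cases you yourself flag as delicate are exactly where the argument breaks, and neither is closed. In sub-case (a), take $u$ a leaf of $H$ with $\deg_H(v)=2$ and both $u^{*},v^{*}$ blue with $uu^{*},vv^{*}\notin E(H)$: after the two boundary pairings you remove only the $\deg_H(v)=2$ edges of $H$ incident to $\{u,v\}$, leaving $n-2$ edges of $H$ on $2n-4=2(n-2)$ points, while Theorem~\ref{thm:general} tolerates only $n-3$. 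The bookkeeping you invoke therefore does not go through, and the further sub-sub-cases ($v^{*}$ red, $vv^{*}\in E(H)$) are not addressed at all. Sub-case (b) is only a gesture (``in the spirit of the proof of Claim~\ref{claim:case1}''); note in particular that a line through $u$ splits the other $2n-1$ points into parts of opposite parities, so ``two convex matchable halves'' cannot by itself produce a perfect matching --- you still must say with whom $u$ is matched and why that edge avoids $H$ and the rest of the construction.

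There is also a structural warning sign: your argument never uses the hypothesis that the boundary of the convex hull contains at most $n+1$ of the $2n$ points, which is the standing assumption for this claim and the engine of the paper's proof. The paper argues by induction on Theorem~\ref{thm:hGc1isn}: with $x$ and $y$ hull points strictly on each side of $\ell_{uv}$ one has $x+y+2\leq n+1$; if either side has ``many'' interior points then that side together with $\{u,v\}$ satisfies the induction hypothesis and is matchable (contradiction), and otherwise the total number of interior points is at most $x+y-2\leq n-3$, contradicting the fact that at least $n-1$ of the $2n$ points are interior. That counting step is where the hull condition enters, and it is what lets the paper avoid precisely the leaf/boundary-edge case analysis on which your reduction to Theorem~\ref{thm:general} founders. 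To salvage your approach you would need either a substantially stronger reduction in the leaf case (Theorem~\ref{thm:general} alone is not strong enough after one or two boundary pairings) or to bring the hull-size hypothesis into play as the paper does.
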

\begin{proof}
We prove the claim by induction on $n$. For $n=2$ such an $H$ does not exist, so the lemma is true. Assume that Theorem \ref{thm:hGc1isn} is true for $m<n$.\\
Assume that we have an edge $e$ connecting two points in the boundary of the convex hull. Let $l_e$ be the line supporting $e$.  By Lemma \ref{lem:threeparts} we know that on each side of $l$ there are an equal number of red and blue points. Let $x$ and $y$ be the number of boundary points in $RP(l)$ and $LP(l)$, respectively. By assumption, $x+y+2\leq{n+1}$.\\
If there are at least $x$ points in the interior of $RP(l)$, then $RP(l)\cup{\{u,v\}}$ satisfies the conditions of the induction hypothesis, because we have two red points more than blue points in this subgraph, and the red segments in this subgraph form a subtree. (If $|RP(l)\cup{\{u,v\}}|=2n$ then $uv$ is a boundary edge and we are finished.)  Thus $RP(l)\cup{\{u,v\}}$ and $LP(l)$ are disjoint matchable sets and we get a perfect matching, contrary to assumption.\\
We conclude that there are at most $x-1$ interior points in $RP(l)$.  Similarly there are at most $y-1$ interior points in $LP(l)$ so altogether there are at most $x+y-2\leq{n-3}$ points in the interior.  This contradiction proves the claim.
\end{proof}

\begin{claim} \label{clm:cross}
For every leaf edge $e$ of $H$, let $l_e$ be the supporting line of $e$. Then the only edge of $H$ that intersects the line $l_e$ is $e$.
\end{claim}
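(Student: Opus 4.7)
The plan is to argue by contradiction: suppose some edge $f = ab \in H$ with $f \neq e$ crosses $l_e$. After relabelling, $a \in RP(l_e)$ and $b \in LP(l_e)$; both are red since $V(H)$ is the red set. Let $v$ be the leaf endpoint of $e$ and $u$ its unique $H$-neighbor; by \clref{clm:leaf}, $u$ lies on the boundary of the convex hull. The goal is to construct a non-crossing perfect matching of $\graphA - H$, which would contradict the hypothesis that $H$ eliminates all matchings.

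First I would dispose of the easy subcase in which $v$ also lies on the convex hull: by the immediately preceding claim, $e$ would then be a hull boundary edge, so $l_e$ would be a supporting line and one of $RP(l_e), LP(l_e)$ would be empty, contradicting the existence of both $a$ and $b$. Hence I may assume that the leaf $v$ is interior.

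Next I exploit the tree structure of $H$: since $ab \in H$, the edges $ua$ and $ub$ cannot both lie in $H$ (else $u, a, b$ would form a triangle in the tree $H$), so WLOG $ua \notin H$; moreover $va, vb \notin H$ since $v$ is a leaf of $H$. I propose to take $va$ as one matching edge (a non-$H$ red--red edge) and then match the remaining $2n-2$ points $V(\graphA) \setminus \{v, a\}$. After removing $v$ and $a$, the closed upper side $\{u\} \cup (RP(l_e) \setminus \{a\})$ has exactly $r$ reds and $r$ blues, and the open lower side $LP(l_e)$ has $l$ reds and $l$ blues (writing $2r = |RP(l_e)|$ and $2l = |LP(l_e)|$, both balanced by \lemref{lem:threeparts}(c)). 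Each side sits in a convex closed half-plane with equal red and blue counts, so \thref{thm:zvuvonim} yields a non-crossing red--blue matching on each side, whose edges are automatically non-$H$ (being red--blue, while $H$ is red-only).

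The main technical obstacle is ensuring that the chord $va$ does not cross the chosen matching on the upper half-plane (it clearly cannot cross the lower matching, which lives in the open lower half, nor the two half-plane matchings each other, by the same separation argument). To control this I would choose the upper matching via a Ham-Sandwich-type refinement: apply \thref{thm:ham} inside the upper set to split it by a balanced line that avoids crossing the segment $va$, and then recurse inside each resulting balanced sub-region. If the pairing $va$ happens to be geometrically obstructed, I would swap to the alternative pairing using $ua$ in place of $va$ (also non-$H$ by our WLOG assumption) and carry out the analogous construction; the structural constraints of \lemref{lem:threeparts}(c) together with the tree property should force at least one of the non-$H$ pairings from $\{u,v\} \times \{a,b\}$ to yield a matching that is non-crossing with its chord, producing the contradiction. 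I expect this geometric compatibility check to be the main work, likely requiring a case split on whether the segments $uv$ and $ab$ themselves cross and on which of $ua, ub$ is the one forbidden to $H$.
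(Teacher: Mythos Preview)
Your approach diverges from the paper's and leaves the decisive step open. The paper does not try to build an explicit matching around a hypothetical crossing edge at all; it gives a short counting argument. Writing $|LP(l_e)|=2x$ and $|RP(l_e)|=2y$ (both sides are balanced by \lemref{lem:threeparts}(c)), the set $RP(l_e)$ is matchable, so if $H$ kills all matchings then $LP(l_e)\cup\{u,v\}$ must be non-matchable. But this is a set of $2(x+1)$ points, so by \thref{thm:general} it must carry at least $x+1$ edges of $H$. Symmetrically $RP(l_e)\cup\{u,v\}$ carries at least $y+1$ edges of $H$. The edge $e$ is the only one counted twice, giving at least $(x+1)+(y+1)-1=x+y+1=n$ edges of $H$ that lie entirely on one (closed) side of $l_e$. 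Since $|E(H)|=n$, no edge of $H$ other than $e$ can meet $l_e$. No case analysis on the position of $v$, and no use of the specific crossing edge $f$, is needed.

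Your construction, by contrast, commits to the chord $va$ and then must force a red--blue matching on $\{u\}\cup(RP(l_e)\setminus\{a\})$ that avoids it. This is the whole difficulty, and the sketch you give does not resolve it. A matching produced by \thref{thm:zvuvonim} offers no control over which segments appear; a Ham-Sandwich bisector of the upper point set has no reason to respect the fixed segment $va$, and recursing does not change this. Switching to $ua$, $vb$, or $ub$ gives you at most four candidate chords, but nothing in the tree structure of $H$ or in \lemref{lem:threeparts}(c) forces one of them to be geometrically separable from some red--blue matching of the complementary points in its half-plane; the case split you anticipate (on whether $uv$ and $ab$ cross, and on which of $ua,ub$ is absent from $H$) governs which chords are available, not whether any of them can be made non-crossing with an accompanying matching. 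In short, the ``geometric compatibility check'' you defer is the entire content of the proof, and the paper's counting argument sidesteps it completely.
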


\begin{proof}
We already saw that on each side of $e$ there are an equal number of red and blue points. Let $|LP(l_e)|=2x$ and $|RP(l_e)|=2y$. If there is no perfect matching then $LP(l_e)\cup{e}$ cannot be a matchable set because $RP(l_e)$ is a matchable set, so in $LP(l_e)\cup{e}$ there must be at least $x+1$ edges of $H$ (otherwise, by an inductive argument similar to that of the previous proof, $LP(l_e)\cup{e}$ would be a matchable set). Similarly, in $RP(l_e)\cup{e}$ there are at least $y+1$ edges. This means that in total we have at least $x+y+2-1=n$ edges.  Thus we have counted all edges of $H$ by considering only edges that do not cross $l_e$!  We conclude that no edge other than $e$ intersects $l_e$.
\end{proof}

\begin{claim} \label{clm:2 lines}
(a) Let $v_{0}v_{1}...v_{k}$ be $k+1$ adjacent points on the boundary of the convex hull such that $v_{0},v_{k}$ are of degree at least $3$ in $H$ and the rest are of degree $2$. Denote by $l_{1},l_{2}$ lines passing through $v_{0},v_{k}$ and their neighbors of degree $1$ in $H$ (they must have such!), $w_0$, and $w_k$, respectively. Then there must be a blue point $v$ on the boundary of the convex hull different from $\{ v_{i}\} _{i=0,1..,k}$ between the lines $l_{1}$ and $l_{2}$.\\
(b) The conclusion of part $(a)$ is also true when $v_0$ or $v_k$ has a neighboring leaf on the boundary or when $k=0$ if $v_0$ has $2$ neighboring 
 leaves in $H$.
\end{claim}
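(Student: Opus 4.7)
The plan is to prove the claim by contradiction: assume no blue boundary point $v$ lies between $l_1$ and $l_2$ in the required sense, and construct a perfect matching of $G - H$, contradicting the hypothesis on $H$. As a preliminary, I note that the parenthetical existence of $w_0, w_k$ follows from the preceding claim: any $H$-edge between two boundary vertices of the hull must be a boundary edge, so $v_0$ (of degree at least $3$ in $H$) has at most two boundary neighbors, and hence at least one neighbor off the boundary; by Lemma \ref{lem:threeparts}(b), such a neighbor must be a leaf of $H$. The same reasoning yields $w_k$.

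For part (a), I would begin by considering the partition of the plane into the (up to) four wedges determined by $l_1$ and $l_2$. Label the wedge containing $v_1, \ldots, v_{k-1}$ by $W_1$, its opposite wedge by $W_3$, and the two side wedges by $W_2, W_4$; let $p_1, p_2$ denote the second intersections of $l_1, l_2$ with the boundary of the convex hull, so that the long arc traverses $W_2, W_3, W_4$ in order. Applying Lemma \ref{lem:threeparts}(c) to each of $l_1$ and $l_2$ and combining the resulting equalities, one obtains that if $d = R(W_1) - B(W_1)$ denotes the red-blue imbalance strictly inside $W_1$, then $R(W_3) - B(W_3) = d$ and $R(W_j) - B(W_j) = -d$ for $j = 2, 4$. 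Moreover, Claim \ref{clm:cross} guarantees that no edge of $H$, other than $v_0 w_0$ and $v_k w_k$, crosses either $l_1$ or $l_2$.

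The central step is the matching construction. Under the assumption, every boundary point of the hull lying strictly inside $W_3$ is red, hence a vertex of $H$. I would distribute the four red points $v_0, v_k, w_0, w_k$ among the wedges so as to form four matchable convex pieces; the observation recorded earlier in the paper (any even convex set with at least as many blue as red points is matchable via iterated Ham Sandwich, and disjoint matchable convex sets combine) applies, and the balance identities above, together with the assumption that $W_3$ contains no blue boundary point, should force each of the four pieces to be matchable. Taking the union of the four matchings (none of which uses an $H$-edge, by Claim \ref{clm:cross}) then produces a perfect matching of $G - H$, contradicting the hypothesis on $H$ and proving the existence of the desired blue boundary point.

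I anticipate the main difficulty to be the combinatorial bookkeeping in the matching construction, particularly the placement of $v_0, v_k, w_0, w_k$ among the wedges so that all four pieces become simultaneously matchable. This parallels the case analyses in Claim \ref{clm:hezi} and in the proof of part (b) of Lemma \ref{lem:threeparts}, and the crucial point is that the tree structure of $H$, in combination with Claim \ref{clm:cross}, prevents $H$-edges from linking distinct wedges, so that matching inside a wedge never accidentally reintroduces an $H$-edge. Part (b) follows by routine modifications: when $v_0$ has a boundary leaf neighbor, the line $l_1$ coincides with a boundary edge of the hull and one of the wedges degenerates to empty; when $k = 0$ with two leaves at $v_0$, the four wedges share $v_0$ as a common apex, but the balance-and-match strategy applies as before.
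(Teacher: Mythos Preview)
Your overall contradiction strategy is the same as the paper's, and your invocations of Claim~\ref{clm:cross} (no $H$-edge except the leaf edge crosses $l_i$) and of Lemma~\ref{lem:threeparts}(c) (equal red/blue counts on either side of each $l_i$) are exactly right. The existence of the leaves $w_0,w_k$ via the preceding claim is also fine.

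The gap is in the matching construction. Your plan is to allocate only the four line points $v_0,v_k,w_0,w_k$ among the wedges so that each wedge becomes matchable. This cannot work, for a parity reason: in the case where $l_1$ and $l_2$ meet outside the hull, the three strips $LP(l_1)$, $RP(l_1)\cap LP(l_2)$, $RP(l_2)$ each contain an even number of points, and $w_0$ can be adjoined only to the closure of $LP(l_1)$ (it lies on $l_1$) while $w_k$ can be adjoined only to the closure of $RP(l_2)$. Any distribution then leaves two of your pieces with odd cardinality, so they are not perfectly matchable. The paper resolves this by introducing an additional matched pair $\{x_1,x_2\}$: two \emph{adjacent} boundary vertices on the opposite arc whose segment crosses both $l_1$ and $l_2$. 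The key observation you are missing is that such an adjacent pair exists precisely under your contradiction hypothesis (no blue boundary vertex in the opposite ``between'' region), and that $x_1x_2\notin H$ by Claim~\ref{clm:cross}. Removing $x_1$ from one side strip and $x_2$ from the other restores even parity everywhere and yields the decomposition
\[
LP(l_1)\cup\{w_0\}\setminus\{x_1\},\quad \bigl(RP(l_1)\cap LP(l_2)\bigr)\cup\{v_0,v_k\},\quad RP(l_2)\cup\{w_k\}\setminus\{x_2\},\quad \{x_1,x_2\}.
\]

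There is a second missing idea. When $l_1$ and $l_2$ meet \emph{inside} the hull, the wedge bookkeeping changes (both opposite wedges $D_1,D_2$ contain points, and only blue ones apart from the $v_i$). Here the paper does not use $l_1$ directly in the decomposition: it rotates $l_1$ about $v_0$ to a line $l_3$ chosen so that $RP(l_3)\cap LP(l_2)$ contains exactly $k+1$ blue points, and then decomposes using $l_3$ and $l_2$ together with an adjacent boundary pair $x_1,x_2$ crossing $l_2$. Your uniform four-wedge scheme does not produce this adjustment, and without it the middle piece cannot be balanced. So the ``combinatorial bookkeeping'' you anticipate is not merely bookkeeping: it requires the $\{x_1,x_2\}$ trick in both cases and the auxiliary rotated line $l_3$ in the second.
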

\begin{proof}
The lines $l_{1}$ and $l_{2}$ can cross inside the convex hull or outside it so we have two cases to consider [see figure 4].\\
$(i).$ The lines intersect outside the convex hull.\\
Without loss of generality assume $v_0$ is to the left of $v_k$.
We claim that in $RP(l_1)\cap{LP(l_2)}$ there are $k+1$ blue points and no red points, except for $v_{0},v_{1},...,v_{k}$:\\
By Claim \ref{clm:cross}, no edge of $H$ can cross either of the lines $l_1$ and $l_2$, and the graph of $H$ is connected, so there are no red points. Since in $RP(l_2)$ and $LP(l_1)$ there are equal numbers of red and blue points, by Claim \ref{clm:hezi} we must have in $RP(l_1)\cap{LP(l_2)}$ exactly $k+1$ blue points.\\
Let $x_1, x_2$ be adjacent points on the boundary of the convex hull such that $x_1x_2$ crosses both $l_1$ and $l_2$. Such points exist, for otherwise we would have a blue point on the boundary of the convex hull between $l_1$ and $l_2$ and we are finished. Also, $x_1x_2$ is not an edge of $H$ by Claim \ref{clm:hezi}.\\
Now
$$LP(l_1)\cup\{w_0\}-\{x_1\}~~,~~RP(l_1)\cap{LP(l_2)}\cup\{v_0,v_k\}~~,~~RP(l_2)\cup\{w_k\}-\{x_2\}~~,~~\{x_1,x_2\}$$
is a decomposition of the $2n$ points into four convex matchable disjoint sets. This decomposition defines a perfect matching. \\
(Note that $u_0$ and $u_k$ become blue points since we disconnect them from their neighbors). Thus we have reached a contradiction.\\
\\
$(ii).$ The lines intersect inside the convex hull.\\
In this case the region between lines $l_1$ and $l_2$ is divided into two subregions:\\
Denote $D_1=RP(l_1)\cap{LP(l_2)}$ and $D_2=LP(l_1)\cap{RP(l_2)}$. As in case 1, all points in $D_1$ and $D_2$ must be blue (except for the boundary points $v_0,..,..v_k$). It is also easy to check that the number of blue points in $D_1$ minus the number of blue points in $D_2$ is $k+1$, so in $D_1$ there are at least $k+1$ points. \\
Let $l_3$ be a line passing through $v_0$, such that in $RP(l_3)\cap{LP(l_2)}$ there are exactly $k+1$ blue points. (rotate $l_1$ to the right and use the fact that there are at least $k+1$ blue points in $RP(l_1)\cap{LP(l_2}$).\\
Let $x_1, x_2$ be adjacent points on the boundary of the convex hull such that $x_1 x_2$ crosses $l_2$.\\
Now
$$LP(l_3)\cap{LP(l_2)}-\{x_1\}~~,~~RP(l_3)\cap{LP(l_2)}\cup\{v_0\}~~,~~RP(l_2)\cup\{w_k\}-\{x_2\}~~,~~\{x_1,x_2\}$$
is a decomposition of all points into disjoint convex matchable sets which forms a perfect matching. Again we have reached a contradiction.

To conclude case $(ii)$ is not possible, and case $(i)$ is possible only if there are blue points between the lines on the boundary of the convex hull.\\
The proof of $(b)$ is very similar and uses the same ideas, so we omit it.
\end{proof}

Finally we can prove Theorem \ref{thm:hGc1isn}:

\begin{proof}
\emph{[Theorem \ref{thm:hGc1isn}]}
We now have severe restrictions on the geometric structure of $H$, due to Lemma \ref{lem:threeparts} and the last three claims (see [Figure 5] for a schematic illustration).

If $k$ edges of $H$ lie on the boundary of the convex hull, then the rest are $n-k$ leaf edges. The lines supporting the leaf edges divide the plane into $n-k+1$ regions. On the boundary of the convex hull there are $k+1$ red points, so we have at most $n-k$ blue points on the boundary of the convex hull.  Thus at least one region lacks blue points from the convex hull, so by Claim \ref{clm:2 lines} we reach a contradiction.
\end{proof}

We remarked, after stating Theorem \ref{thm:hGc1isn} that the result is tight with respect to both conditions - the number of points on the boundary of the convex hull and the number of edges removed. We shall now give examples that show it.\\
\emph{Example 1}:\\
Take $n+1$ points in convex position to form an ($n+1$)-gon $C$.  Put $n-1$ points inside $C$ such that all of them are very close to the center of some edge and such that they are on the same side of each diagonal of $C$. Now let $H$ be the boundary cycle of $C$.  $H$ has $n+1$ edges.  Each perfect matching must match at least two points of $C$ to each other; denote this couple $ab$.  Then on some side of $ab$ there are only points from $C$, and we must match some boundary edge which is in $H$.  Thus there are no matchings in $\graphA-H$.\\
\emph{Example 2}:\\
Let $D$ be a convex polygon with $n+2$ vertices $P_0, P_1, P_2, ..., P_{n+1}$. Put $n-2$ points inside $D$ close to $P_0$.  Let $H$ be the $n$ consecutive boundary edges aside from $P_0P_1$ and $P_0P_{n+1}$.  If there is a matching, then some two couples of vertices in $D$ are matched. One of them must be $P_kP_l$ with $k\neq{0,1,n+1}$.
If $l\neq{0}$ and $\wlogg$ $k<l$, then the set $C=P_{k}P_{k+1}...P_{l}$ is convex and has no interior points, thus some boundary edge $P_{i}P_{i+1}$ of $C$ must be matched (which is impossible because that edge belongs to $H$) or we have a single unmatched vertex (it cannot be matched without crossing $P_kP_l$).
If $l=0$, then we must have another couple $P_mP_r$ with $m,r\neq{0,1,n+1}$ and we can apply the same argument to this pair of vertices. \\
Thus we have eliminated all matchings.

\section{Upper Bound}
\label{sec:upper bound}
For which configuration of the vertices of $\graphA$ is $h(\graphA)$ maximal?
In any complete geometric graph $\graphA$, the removal of all the neighbors of some point leaves no matching in the remaining graph, so for any such $\graphA$ on $2n$ points we have $h(\graphA)\leq{2n-2}$. It turns out that this bound is attained.

\begin{theorem}
There exists a complete geometric graph $\graphD$ on $2n$ points in general position in the plane such that for any subgraph $H$ of $\graphD$ with $2n-2$ edges, there is always a perfect matching in $\graphD-H$.
\end{theorem}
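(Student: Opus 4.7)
The plan is to construct $\graphD$ explicitly and to exhibit on it a collection $\mathcal{F} = \{M_1, \ldots, M_{2n-1}\}$ of $2n-1$ pairwise edge-disjoint non-crossing perfect matchings. Once such an $\mathcal{F}$ is in hand, the theorem follows by a pigeonhole argument: any set $H$ of $2n-2$ edges can ``kill'' at most $2n-2$ members of $\mathcal{F}$ (since a matching is destroyed only when at least one of its edges lies in $H$), so at least one $M_i$ is disjoint from $H$ and is itself a non-crossing perfect matching in $\graphD - H$.

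Since the $2n-1$ matchings contain $n$ edges each, they collectively use $n(2n-1) = \binom{2n}{2}$ edges, so $\mathcal{F}$ must in fact be a $1$-factorization of $K_{2n}$ all of whose factors are non-crossing. This is a stringent geometric condition; in particular, convex position admits only $n$ pairwise edge-disjoint non-crossing matchings and is therefore ruled out, so $\graphD$ has to be chosen highly non-convex. As a sanity check, in the base case $n=2$ one may take a triangle with a fourth point in its interior: the three perfect matchings of $K_4$ on this set are all non-crossing and automatically edge-disjoint, giving $h(\graphD) = 2 = 2n-2$.

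For general $n$ a natural candidate is a ``wheel'' configuration: a central point $v_0$ placed inside a convex $(2n-1)$-gon with vertices $v_1, \ldots, v_{2n-1}$. For each $i$, one builds $M_i$ as the spoke $v_0 v_i$ together with a non-crossing matching on the remaining $2n-2$ points, which lie in convex position. The main obstacle is the combinatorial design problem of choosing, for every $i$, one of the $C_{n-1}$ non-crossing matchings of the residual $2n-2$-point convex set so that the $2n-1$ resulting matchings are pairwise edge-disjoint on the non-spoke chords $v_j v_k$. Fine-tuning the angular positions of $v_1, \ldots, v_{2n-1}$ around $v_0$ should provide the flexibility needed, and the edge-disjoint assignment can likely be exhibited explicitly, for instance by rotating a fixed ``zig-zag'' residual matching pattern as a function of $i$. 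Should this direct construction prove too delicate, an alternative is an inductive scheme on $n$, starting from the $n=2$ configuration above and adjoining two new points at each step in a way that extends the non-crossing $1$-factorization from $K_{2n}$ to $K_{2n+2}$; here the hard part is choosing the new points so that each old factor can be modified to absorb one new edge without introducing crossings, while one entirely new factor is also created.
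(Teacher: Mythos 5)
Your reduction is exactly the one the paper uses: exhibit $2n-1$ pairwise edge-disjoint non-crossing perfect matchings and finish by pigeonhole, and your observation that such a family is forced to be a non-crossing $1$-factorization of $K_{2n}$ (so \emph{every} edge, including every long diagonal, must appear in some factor) is correct and is the right way to think about the construction. The base case $n=2$ is also fine. The gap is that the construction for general $n$ is not carried out, and the specific configuration you propose --- a hub $v_0$ placed \emph{strictly inside} a convex $(2n-1)$-gon --- provably cannot be completed once $n\ge 3$, so no amount of ``fine-tuning the angular positions'' will help. Indeed, since every edge must be used, the short diagonal $v_{i-1}v_{i+1}$ lies in some factor; in any factor other than the one with spoke $v_0v_i$, the vertex $v_i$ would have to be matched to $v_0$ (impossible: that factor has a different spoke), to $v_{i\pm 1}$ (impossible: both are used by $v_{i-1}v_{i+1}$), or to some $v_m$ with $v_iv_m$ crossing $v_{i-1}v_{i+1}$ (impossible by convexity). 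Hence $v_{i-1}v_{i+1}$ must lie in the factor containing the spoke $v_0v_i$. But for $2n-1\ge 5$ an interior hub $v_0$ lies outside the ear triangle $v_{i-1}v_iv_{i+1}$ for all but at most two indices $i$ (distinct ears are almost disjoint), and for any such $i$ the chord $v_{i-1}v_{i+1}$ separates $v_i$ from $v_0$ and therefore crosses the spoke --- a contradiction.

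The paper's construction differs from yours only in where the hub sits, but that difference is essential: take $P_1,\dots,P_{2n-1}$ in convex position on a very flat arc and place the extra point $P$ so that no segment $PP_i$ meets the interior of the polygon $C=P_1P_2\cdots P_{2n-1}$ (e.g., the $P_i$ on a shallow convex arc and $P$ far away on its convex side, so each $PP_i$ reaches $P_i$ from outside $C$). Then the rotating family you had in mind works verbatim: $M_i=\{PP_i\}\cup\{P_{i-j}P_{i+j}\ :\ 1\le j\le n-1\}$ with indices taken mod $2n-1$. Each $M_i$ is non-crossing because the chords $P_{i-j}P_{i+j}$ are nested and the spoke $PP_i$ never enters the interior of $C$, so it cannot meet any chord; the factors are pairwise edge-disjoint because any chord $P_aP_b$ determines its factor via $2i\equiv a+b\pmod{2n-1}$, and $2$ is invertible modulo the odd number $2n-1$. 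Replacing your interior hub by this exterior placement and adding the explicit nested pattern together with the edge-disjointness check closes the gap and yields the paper's proof.
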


\begin{proof}
Define $\graphD$ to be the complete graph on the following $2n$ points: The set $C=P_{1}P_{2}...P_{2n-1}$ of $2n-1$ points in convex position, plus one more point $P$ such that there is no segment $PP_i$ that crosses the interior of $C$ [see figure 6].
For each $i=1,2..,2n-1$ define a perfect matching $M_i$ as follows:\\
$$P_{i-1}P_{i+1},P_{i-2}P_{i+2}..., PP_i$$ It is easy to check that we obtain non-crossing perfect matchings in this manner.  All such matchings are edge-disjoint, because from any segment we can recover the matching to which it belongs. Thus we have $2n-1$ disjoint matchings. Therefore, we need to remove at least $2n-1$ edges to eliminate all possible matchings.
\end{proof}
We thus conclude that $h(\graphD)=2n-2$.

\section{The Random Case}
\label{sec:random case}
In the previous sections we have seen that $n-1\leq{h(\graphA)\leq{2n-2}}$. The examples of $\graphB$ (the convex case) and $\graphD$ (the configuration of the last section) proved that these bounds are tight. Yet, one may still wonder what "usually" happens - in the random case - for example when the points are picked randomly in a uniform distribution (with respect to area) inside a bounded convex region.
The following theorem, our second main theorem in this paper, gives a lower bound which is much higher than the general lower bound of $n-1$.
From now on, $\Gamma$ will stand for a bounded, closed and convex set in the plane with piecewise smooth boundary.

\begin{theorem}\label{thm:lowerbound}
Let $\Gamma$ be as above. Let $X$ be a set of $2n$ points which were picked randomly and independently in a uniform distribution (with respect to area) inside $\Gamma$. Then as $n$ tends to infinity, almost surely we need to remove more than $n+\frac{n}{3\log {(2n)}}$ edges to eliminate all non-crossing matchings tends to 1.  That is, almost always and in average the value of $h(\graphA)$ is more than ${n+\frac{n}{3\log {(2n)}}}$
\end{theorem}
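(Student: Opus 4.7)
My plan is to combine a quantitative extension of the structural analysis of Lemma \lemref{lem:threeparts} with a classical probabilistic estimate on the convex hull of random points in $\Gamma$. Let $s = \lfloor n/(3 \log 2n) \rfloor$ and suppose for contradiction that, with non-negligible probability, there exists $H \subseteq E(\graphA)$ with $|E(H)| = n+s$ such that $\graphA - H$ has no non-crossing perfect matching (we may take equality, since enlarging $H$ can only destroy matchings). As in the proof of Theorem \thref{thm:general}, if every connected component of $H$ had at most $n$ vertices we could partition $V(\graphA)$ into two classes of $n$ vertices each that respect the components of $H$, and conclude via Theorem \thref{thm:zvuvonim}. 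Hence $H$ has a connected component $T$ with $|V(T)| > n$, which satisfies $|E(T)| \geq |V(T)| - 1 \geq n$, so at most $s$ edges of $H$ lie outside $T$.

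The heart of the proof is then a quantitative extension of Lemma \lemref{lem:threeparts} to $T$. The rotation-line arguments of Claims \clref{claim:case1}--\clref{clm:hezi} show, in the $|E(H)|=n$ case, that every degree-$\geq 2$ vertex of $H$ lies on the boundary of the convex hull, by constructing in each case an alternative matching that uses two convex regions separated by a supporting line. In our setting the same constructions still produce such a matching unless it happens to use one of the $\leq s$ edges of $H$ outside $T$; each such obstruction blocks only a bounded number of potential alternative matchings, so up to $O(s)$ degree-$\geq 2$ vertices may end up in the interior. Tracking this slack carefully, I expect to show that $T$ has at most $O(s)+h$ vertices of degree $\geq 2$, where $h$ is the number of convex-hull vertices of $\graphA$, and hence at least $n - O(s) - h$ leaves, each attached by an extension of Claim \clref{clm:leaf} to a boundary vertex.

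For the final step, we invoke the classical estimate that for $2n$ points drawn uniformly from a bounded convex $\Gamma$ with piecewise smooth boundary, with probability $1-o(1)$, the convex hull has at most $O(\log n)$ vertices when $\Gamma$ is polygonal and $O(n^{1/3})$ when its boundary is smooth. In either case $h = o(n/\log n)$ with high probability, so $T$ has $(1-o(1))n$ leaves distributed among $o(n/\log n)$ boundary vertices, forcing many leaves per boundary vertex. Combined with the supporting-line balance condition of Claim \clref{clm:hezi} and the blue-boundary-point requirement of Claim \clref{clm:2 lines}, this imposes exact equalities between numbers of points on various sides of certain lines, which for random points hold only with vanishing probability. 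The main obstacle is the quantitative extension of Lemma \lemref{lem:threeparts}: the proofs of Claims \clref{claim:case1}--\clref{clm:hezi} rely on reducing to Theorem \thref{thm:general} after removing a single edge, and adapting those case analyses to tolerate up to $s$ additional forbidden edges will require explicit book-keeping through all the rotation arguments.
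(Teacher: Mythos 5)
There is a genuine gap; in fact there are two, and they sit at the two load-bearing points of your plan. The paper does not extend \lemref{lem:threeparts} to graphs with more than $n$ edges at all. Instead it replaces each of the $n+k$ removed edges by a red midpoint and applies the Ham Sandwich theorem recursively, halving the blue points and the red midpoints together; after roughly $\log_2(n/\log n)$ steps the plane is partitioned into disjoint convex regions, each containing at least $2\log 2n$ blue points and having red excess $R(m)-B(m)/2\leq 1$. Theorem \ref{thm:convexunit} (the only place the randomness enters) guarantees that in each region at most half of the blue points lie on their convex hull, so \thref{thm:hGc1isn} applies verbatim inside each region, and the matchings of disjoint convex regions concatenate. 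The threshold $n/(3\log 2n)$ is exactly what makes the two recursions ($B(m)\geq 2n/2^m-2$ and $H(m)\leq k/2^m+1$) compatible. Your route bypasses all of this, and the step that replaces it --- the ``quantitative extension of \lemref{lem:threeparts}'' --- is precisely the step you leave open. The assertion that each of the $\leq s$ edges outside the big component ``blocks only a bounded number of potential alternative matchings,'' so that only $O(s)$ degree-$\geq 2$ vertices can be interior, has no identified mechanism behind it: the rotation arguments in Claims \ref{claim:case1}--\ref{clm:hezi} rest on exact parity bookkeeping ($n+1$ red versus $n-1$ blue points, $D(l)$ passing through $0$, the exact balance on both sides of $l_e$), and with $n+s$ red edges none of these counts close up. This is the heart of your proof and it is explicitly deferred, so the argument is a program rather than a proof.

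The second gap is the endgame, which I believe fails as described. The balance equalities of \clref{clm:hezi} and the blue-boundary-point requirement of \clref{clm:2 lines} are \emph{consequences} of the hypothesis that $H$ eliminates all matchings; they hold automatically for any such $H$ on any point set, so ``for random points these exact equalities hold only with vanishing probability'' cannot by itself produce a contradiction. The adversary chooses $H$ --- hence the red/blue colouring and the supporting lines --- after seeing the points, and there are exponentially many candidate subgraphs, so no union bound over a polynomial family of lines applies. In the paper the contradiction in the $n$-edge case is purely combinatorial (counting the regions cut out by leaf-edge lines against the number of blue hull points), and in the random case the contradiction comes from Theorem \ref{thm:convexunit}, not from lines failing to balance; you would need to replace this step entirely. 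Two smaller issues: your opening reduction reuses the partition argument of \thref{thm:general}, whose key step (``there are at least $k$ vertices of $\graphA$ not in $H$'') uses $|E(H)|\leq n-1$ and must be redone for $n+s$ edges; and the $O(\log n)$ / $O(n^{1/3})$ bounds on the number of hull vertices are in-expectation results, so the high-probability statement you invoke needs a concentration argument or a citation.
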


In order to prove this theorem we shall state the following theorem, whose proof will be given in the next section.
\begin{theorem} \label{thm:convexunit}
Let $\Gamma$, $X$ be as above. Suppose $k\geq{\log (n)}$. Then with probability that tends to 1 as $n$ tends to infinity, for every subset of $X$ consisting of $k$ points in convex position there are at least $k$ other points from $X$ which lay inside the convex polygon that the former points describe.
\end{theorem}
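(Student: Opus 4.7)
I would use a first-moment (union bound) argument. Let $Y$ denote the number of ``bad'' $k$-subsets $S\subseteq X$: namely, $k$-subsets in convex position whose convex hull contains fewer than $k$ other points of $X$. By Markov it suffices to show $\mathbb{E}[Y]\to 0$ as $n\to\infty$. By exchangeability of the $2n$ iid uniform points,
\[
\mathbb{E}[Y] \;\leq\; \binom{2n}{k}\,\mathbb{E}\Bigl[\Pr\bigl[\mathrm{Bin}(2n-k,A) < k \bigm| A\bigr]\Bigr],
\]
where $A := \mathrm{area}(\mathrm{conv}(X_1,\ldots,X_k))/|\Gamma|$ is the normalized area of the convex hull of $k$ fixed random points (and the binomial counts how many of the remaining $2n-k$ fall into that hull).

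The plan is to fix a threshold $\alpha := Ck\log(2n)/n$ for a sufficiently large constant $C$, and decompose
\[
\mathbb{E}\Bigl[\Pr[\mathrm{Bin}(2n-k,A)<k\mid A]\Bigr] \;\leq\; \Pr[A<\alpha] \;+\; \Pr[\mathrm{Bin}(2n-k,\alpha)<k].
\]
The Chernoff term is easy: since $(2n-k)\alpha$ is of order $Ck\log n\gg k$, the standard lower tail gives $\Pr[\mathrm{Bin}(2n-k,\alpha)<k]\leq n^{-\Omega(Ck)}$, and multiplying by $\binom{2n}{k}\leq n^{k}$ makes this contribution vanish once $C$ is chosen large enough.

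The main effort is controlling the ``small area'' term $\Pr[A<\alpha]$, which is the probability that $k$ iid uniform points cluster inside a convex region of unusually small area. Geometrically, a convex hull of area less than $\alpha$ is contained in a minimum enclosing rectangle of area at most $2\alpha$. I would split this into two cases: either the enclosing rectangle is roughly square (so the $k$ points lie in a disk of radius $O(\sqrt{\alpha})$, handled by a covering of $\Gamma$ with $O(1/\alpha)$ such disks), or the rectangle is long and thin, in which case its orientation is parallel to an edge of the hull and hence to the line through two of the $k$ points. We then union-bound over the $\binom{k}{2}$ candidate defining pairs, integrating over their positions and forcing the remaining $k-2$ points into a strip of width $O(\alpha/\text{edge length})$.

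\textbf{Main obstacle.} The hardest part is ensuring the resulting bound on $\Pr[A<\alpha]$ is strong enough to absorb the $\binom{2n}{k}$ factor in the borderline regime $k\approx\log n$. A naive strip computation yields only something like $\Pr[A<\alpha]=O(k^{2}\alpha^{2})$, which is insufficient: one needs a bound decaying exponentially in $k$, of the form roughly $\alpha^{k-O(1)}$. Obtaining this requires a careful integration that exploits all $k-2$ independence constraints on the non-defining points simultaneously (close pairs contribute little because the width is then too wide to be restrictive but short hull edges are geometrically rare), possibly combined with concentration for $A_{k}$ via Azuma--Hoeffding and the R\'enyi--Sulanke asymptotics $\mathbb{E}[1-A_{k}]=O((\log k)/k)$. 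Once this geometric estimate is in place, balancing it against the Chernoff term is routine.
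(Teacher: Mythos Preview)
Your framework --- first moment, then split according to whether the hull area $A$ is below or above a threshold --- is exactly the paper's. The Chernoff half is fine. The gap is entirely in the small-area half, and you have correctly flagged it as the main obstacle; the trouble is that your proposed cures do not close it.

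In your displayed bound you have silently discarded the event ``$X_1,\dots,X_k$ are in convex position''. That event is not a technicality here: it is what makes the numbers work at $k\approx\log n$. A covering argument of the kind you sketch (disks for squarish hulls, strips indexed by a defining pair for elongated hulls) is essentially equivalent to the paper's lattice-triangle cover, and in either form the best you can get without the convexity constraint is
\[
\Pr[A<\alpha]\;\lesssim\; (\textrm{cover size})\cdot\alpha^{k}\;\approx\;\alpha^{k-O(1)}.
\]
Feed this back with $\alpha\asymp k(\log n)/n$ and $k=\log n$: the product $\binom{2n}{k}\alpha^{k-O(1)}$ is of order $(C\log n)^{k}\cdot\textrm{poly}(n)=n^{\Theta(\log\log n)}$, which diverges. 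So no amount of careful strip integration recovers this; the bound $\alpha^{k-O(1)}$ is essentially sharp for $\Pr[A<\alpha]$ (all $k$ points can fall in a common small triangle, which already costs $\sim\alpha^{k-1}$), and it is not enough.

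The missing ingredient, and what the paper uses, is B\'ar\'any's (equivalently Valtr's) bound on the probability that $k$ uniform points in a fixed convex body are themselves in convex position:
\[
P(k,T)\;\le\;\Bigl(\frac{B_2}{k^{2}}\Bigr)^{k}.
\]
Keeping the convex-position event and covering by lattice triangles $T$ of area $\le C\varepsilon$, one gets
\[
\Pr\bigl[\textrm{convex position and }A<\varepsilon\bigr]\;\le\;\sum_{T}\Pr[X_{J}\subseteq T]\cdot P(k,T)\;\lesssim\; M^{6}\,(C\varepsilon)^{k}\Bigl(\frac{B_2}{k^{2}}\Bigr)^{k},
\]
and the extra factor $k^{-2k}$ is exactly what cancels $\binom{n}{k}$: with $\varepsilon\asymp k/n$ one ends up with $n^{6}(C'/k^{2})^{k}\to 0$. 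Neither Azuma--Hoeffding (bounded differences for $A_k$ are $O(1)$, giving nothing useful for a deviation of size $1-\alpha$) nor R\'enyi--Sulanke (which concerns $\mathbb{E}[1-A_k]$, i.e.\ the wrong tail) can manufacture this $k^{-2k}$; it genuinely comes from the combinatorial rarity of convex position.

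So: keep the convex-position indicator in your first-moment bound, replace the disk/strip cover by a cover with $\textrm{poly}(n)$ lattice triangles (or any affine-invariant family), and invoke $P(k,T)\le(B_2/k^2)^k$ inside each triangle. With that one change your argument goes through.
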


We now deduce Theorem \ref{thm:lowerbound} from Theorem \ref{thm:convexunit}:
\begin{proof}
Suppose we could have eliminated all matchings by removing $n+k$ edges where $k<\frac{n}{2+2\log {(2n)}}$.
Color the removed edges red and assume that all matchings in the remaining graph have been eliminated. We replace each red edge by a red midpoint (the middle of the corresponding edge).  We have $n+k$ red points.  The original set of $2n$ points we color blue. By the Ham Sandwich Theorem we can divide the plane with a straight line $l$, which is not parallel to any segment which connects two colored points, such that there are the same number of blue points on each side of $l$ and the same number of red points on each side of $l$ . Note that the points may not be in general position, yet we can solve this problem by a slight perturbation of the locations of the red points. It is also possible, yet has probability $0$ that the blue points themselves are not in general position.

We need to consider two cases:\\
$(i).$ If $n$ is even we then have $n$ blue points and at most $\lfloor{\frac{n+k}{2}}\rfloor$ red points on each side of $l$.\\
$(ii).$ If $n$ is odd we continuously move $l$ to the right in a direction perpendicular to it, until we meet a blue point. Denote this translation of $l$ by $l_1$. We do the same to the left direction, and we denote by $l_2$ the corresponding translation of $l$. On $l_1$ (and similarly on $l_2$) there is exactly one blue point.
 %Denote by $l_1$ and $l_2$ the lines passing through the first left and right blue points respectively which are parallel to $l$ [see figure 7].  %Let $A=LP(l_2)\cap{RP(l)}$ and $B=RP(l_1)\cap{LP(l)}$.
Without loss of generality we assume that $dist(l_1,l)\leq{dist(l_2,l)}$.\\
We now divide the plane into two halfplanes whose common boundary is the line $l_1$ and consider each halfplane separately. We erase all the red points between $l_1$ and $l$. Note that they connect blue points left of $l_1$ with points right of $l_2$. Thus in $RP(l_1)$ we have $n+1$ points and at most $\lceil{\frac{n+k}{2}}\rceil$ red points (it is possible that we have one red point on $l$) and in $LP(l_1)$ we have $n-1$ points and at most $\lfloor{\frac{n+k}{2}}\rfloor$ red points.

We continue acting in this manner:\\
After the $m^{th}$ step we are left with at most $2^m$ convex regions, we divide each using the Ham Sandwich theorem and then if we have an even number of blue points in every side, we do nothing. Otherwise we repeat the steps taken in case $(ii)$ above.
We stop dividing a region when it contains less than $4\log 2n$ blue points (and thus not less than $2\log 2n$). \\
We are now going to show that in each such region, we can apply Theorem \ref{thm:hGc1isn} to find a perfect matching.\\
Denote by $B(m),R(m)$ the number of blue and red points, respectively, in an arbitrary region which is created after exactly $m$ steps. In fact we consider an arbitrary sequence of nested regions, each is obtained from the previous one as one of the two halves constructed in the procedure described above. Denote by $H(m)=R(m)-\frac{B(m)}{2} .$ Then
following the process in each step we have:

$B(0)=2n, B(m+1)\geq{\frac{B(m)}{2}-1}$\\
The recursion yields:
$${\frac{2n}{2^m}+2}\geq B(m)\geq{\frac{2n}{2^m}-2}$$

Similarly $\frac{R(m)+1}{2}\geq R(m+1) \geq \frac{R(m)-1}{2}$, and $H(m+1)\leq \frac{H(m)+1}{2}$

As $H(0)=n+k-n=k$, the recursion yields $H(m)\leq \frac{k}{2^m}+1$.
%We have $H(0)=n+k-n=k$ and $H(m+1)<{\lfloor{\frac{H(m)}{2}+\frac{1}{2}}\rfloor}$.
%The recursion in this case yields: $$H(m)\leq{\frac{k}{2^m}+1}$$.

In order to use Theorems \ref{thm:convexunit} and \ref{thm:hGc1isn} we would like to find an integer $m$ such that:\\
$B(m)\geq{2\log {2n}}$ and $H(m)\leq{1}$.\\

For the first condition we can should choose $m$ such that $\frac{2n}{2^m}-2\geq{2 \log {2n}}$,
and for the second condition we want $m$ to satisfy that $\frac{k}{2^m}+1 < 2$.\\
Combining the two constraints gives: $$k < {2^m}\leq{\frac{2n}{2+2\log {(2n)}}}$$ Such an integer $m$ exists if
$${\frac{2n}{2+2\log {(2n)}}}>2k\Leftrightarrow {\frac{n}{2+2\log {(2n)}}}>k$$
But this is exactly what we assumed in the theorem.\\
Denote the integer $m$ that we have chosen by $m^{*}$.  Then after $m^{*}$ steps we have decomposed the plane into convex regions, each containing at least $2 \log {2n}$ points, and the difference between the number of red points and half of the number of blue points is at most 1. By Theorem \ref{thm:convexunit}, with probability which tends to 1 as $n$ tends to infinity, the number of blue points on the boundary of the convex hull of the blue points in each region is at most half of the total number of the blue points in that region.
We can now apply Theorem \ref{thm:hGc1isn} to each final region %(we forget the red points and concentrate on those forbidden segments which connect blue points in the region)
in order to find a perfect matching in each region (note that we always have an even number of points in each region, at each step). Because all regions are convex and disjoint, we have found a non-crossing matching without using the red edges. This completes the proof.
\end{proof}

\subsection{Random convex sets}\label{sec:randomconvex}
This subsection is devoted to the proof of Theorem \ref{thm:convexunit}. We first describe the idea behind the proof, and then the strategy of the proof
%(a more detailed account of this idea)
. Let $n$ be a large integer, $k$ be another integer, not less than $\log n$ and $\varepsilon$ be a small number that we shall specify later . Let $X$ be a set of $n$ points, chosen i.i.d from $\Gamma$ with respect to uniform distribution. We show that with a probability which tends to $0$ as $n$ tends to infinity there exists a subset of $X$, containing $k$ points in convex position s.t. their convex hull is of area smaller then $\varepsilon$. In addition we show that the probability that there exists a subset of $X$, containing $k$ points in convex position s.t. their convex hull is of area larger then $\varepsilon$ and that there are less than $k$ other points of $X$ in the interior of the convex hull also tends to $0$ as $n$ tends to infinity. Thus, with high probability, any subset of $k$ points in convex position contains inside its convex hull at least $k$ other points from $X$.

The strategy of the proof will be to introduce a square lattice with small enough scale such that every convex subset of $\constK$ of size larger than $\varepsilon$ lays in the interior of a lattice triangle whose area is no more than $A\varepsilon$, with $A$ some universal positive constant. We shall use a well known theorem from the theory of random convex sets to show that the probability that in one of the relevant lattice triangles there are $k$ points in convex position is very small.
We then exclude the second option. We show that every convex subset of $\constK$ of size larger than $\varepsilon$ contains a lattice triangle of size at least $a\varepsilon$, for another positive universal constant $a$. We show that with very high probability inside every such triangle there are at least $k$ points from $X$.

Before stating the theorem we need from the theory of random convex sets, we start with some definitions and notations.
Denote by \\ $\la = \{(\frac{a}{M},\frac{b}{M})|a,b\in\mathbb{Z}\}$ the lattice with basic square of size $\frac{1}{M}\times{\frac{1}{M}}$.\\
Let $\tm$ be the set of all triangles with vertices in $\la$. \\
Let $P(n,\constK)$ denote the probability that $n$ points in a bounded, convex, closed planar set with piecewise smooth boundary $\constK$, chosen independently with uniform distribution w.r.t. area, lie in convex position.

We now state the following key fact that we shall need for proving Theorem \ref{thm:convexunit}. For details see for example \cite{BA2}, \cite{VALTR} or \cite{BLAS}.
\begin{theorem}\label{thm:barany}
There exist two universal constants $B_1 , B_2 $ such that for any closed bounded convex set $\Gamma$ and for all $n$ the following inequality is satisfied:\\
$B_1 < (n^2)\sqrt[n]{P(n,\constK)}<B_2$.
\end{theorem}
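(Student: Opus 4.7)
The plan is to reduce the theorem to the case of a triangle via affine invariance and a simple inclusion-monotonicity, and then to handle the triangle by an explicit computation.

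\emph{Reduction to a triangle.} First, $P(n,\Gamma)$ is invariant under invertible affine maps: such maps transport uniform measure to uniform measure (the Jacobian cancels in the normalized probability) and preserve convex position. In particular $P(n,T)$ takes a single value, call it $p_n$, for every triangle $T$. Second, I would record the elementary inclusion inequality: for convex sets $B\subseteq A$, conditioning $n$ i.i.d.\ uniform points of $A$ on all lying in $B$ produces i.i.d.\ uniform points of $B$ while keeping convex position intact, so $P(n,A)\ge (|B|/|A|)^n P(n,B)$. Applied with $T_{\mathrm{in}}\subseteq\Gamma$ (a largest inscribed triangle) and $T_{\mathrm{out}}\supseteq\Gamma$ (a smallest circumscribed triangle), this gives
\[
  \left(\frac{|T_{\mathrm{in}}|}{|\Gamma|}\right)^{\!n} p_n \;\le\; P(n,\Gamma) \;\le\; \left(\frac{|T_{\mathrm{out}}|}{|\Gamma|}\right)^{\!n} p_n .
\]
By classical Blaschke-type theorems, the ratios $|T_{\mathrm{in}}|/|\Gamma|$ and $|\Gamma|/|T_{\mathrm{out}}|$ are universally bounded below by positive constants (largest inscribed triangle has area at least $\tfrac{3\sqrt 3}{4\pi}|\Gamma|$, attained by the ellipse; smallest circumscribed triangle has area at most $2|\Gamma|$, attained by the parallelogram). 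Thus it suffices to prove $p_n\asymp (C/n^2)^n$ for some universal $C>0$.

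\emph{Computing $p_n$.} For a triangle one can invoke Valtr's exact formula, whose derivation fixes the two extremal (say leftmost and rightmost) vertices of a prospective convex configuration and parametrizes the upper and lower chains independently by their $x$-increments and cumulative slopes. After the change of variables, the joint density factors into a product of beta-type integrals evaluable in closed form, and Stirling's approximation extracts the rate $p_n\asymp (C/n^2)^n$. Combined with the sandwich inequalities, one obtains the theorem with $B_1 = \tfrac{3\sqrt 3}{4\pi}\,C$ and $B_2 = 2C$.

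\emph{Main obstacle.} The affine invariance and the sandwich reduction are essentially bookkeeping, and the inscribed/circumscribed area bounds are classical. The technical heart is the exact/asymptotic evaluation of $p_n$: Valtr's parametrization is clever but quite specific to the triangle, and it is not obvious a priori that the rate should be $(C/n^2)^n$ rather than $(C/n)^n$ or $(C/n^3)^n$. An alternative, more conceptual but also more delicate route, due to B\'ar\'any, goes through the typical size of the convex hull of $n$ uniform points (of order $n^{1/3}$ for smooth bodies, $\log n$ for polygons) and extracts $p_n$ as a large-deviation probability, whose logarithmic rate is governed by the affine perimeter; here the affine-invariance used in the sandwich step plays the role of absorbing the dependence on $\Gamma$ into a universal constant. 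Either route carries the genuine difficulty of the theorem.
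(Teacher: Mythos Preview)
The paper does not prove this theorem at all: it is stated as a known result and attributed to the literature (B\'ar\'any, Valtr, Blaschke) with the line ``For details see for example \cite{BA2}, \cite{VALTR} or \cite{BLAS}.'' There is therefore no argument in the paper to compare yours against.

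That said, your sketch is a faithful outline of how the result is actually established in those references. The affine-invariance observation and the inclusion inequality $P(n,A)\ge(|B|/|A|)^n P(n,B)$ are exactly the reduction used in \cite{BA2}; the inscribed/circumscribed triangle area bounds you quote are the classical Blaschke-type facts used there; and the triangle endpoint $p_n$ is precisely the content of Valtr's paper \cite{VALTR}, where the exact formula $p_n=\dfrac{2^n(3n-3)!}{(2n)!\,((n-1)!)^3}$ is derived (yielding $n^2 p_n^{1/n}\to \tfrac{27}{2}e^2$ via Stirling). Your remark that B\'ar\'any's alternative route via the affine perimeter gives a more conceptual derivation of the rate is also accurate and corresponds to \cite{BA2}. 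So your proposal is correct and, rather than differing from the paper's proof, it \emph{supplies} the proof the paper chose to outsource.
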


We also need some geometric lemmas that we shall now state and prove in Appendix \ref{app:cls}.
\begin{lemma}\label{lem:bounding cls}
Let $\Gamma$ be a closed bounded convex set in the plane, with diameter $1$
% (should be thought as a number with "units" of [length])
. Let $M>0$ be an integer. Then for any closed convex subset $K$ of $\Gamma$, with area $S$ we have that if $S > 100/M$, then there exists a triangle $T\in\tm$ which contains $K$ and whose area is no more than $100S$. If $S \leq 100/M$, then there exists a triangle $T\in\tm$ which contains $K$ and whose area is no more than $10000/M$
\end{lemma}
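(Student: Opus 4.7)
My plan is to enclose $K$ in a small (non-lattice) auxiliary triangle using a classical convex-geometry theorem, then perturb its vertices to nearby lattice points while preserving the containment of $K$ and controlling the area. The classical input is Gross's theorem: every planar convex body of area $S$ sits inside a triangle of area at most $2S$; fix such a triangle $T^{*}\supseteq K$ with $\mathrm{area}(T^{*})\leq 2S$. The key geometric observation bridging both cases is that since $K\subseteq\Gamma$ has diameter at most $1$, the elementary inequality $S\leq w(K)\,d(K)$ gives $w(K)\geq S$; hence the minimum altitude of $T^{*}$ (which for a triangle equals its minimum width) is at least $S$, and its diameter is $d(T^{*})=2\,\mathrm{area}(T^{*})/w(T^{*})\leq 4$.

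\emph{Case 1 ($S>100/M$).} The shortest altitude of $T^{*}$ exceeds $100/M$, so $T^{*}$ is fat relative to the lattice. Dilate $T^{*}$ from its centroid by a factor $\lambda=1+c/(SM)$ for a suitable absolute constant $c$ (e.g.\ $c=6$): a direct computation using the centroid's $1:2$ split of the medians shows each side of $\lambda T^{*}$ lies at perpendicular distance $(\lambda-1)(\text{opposite altitude})/3\geq 2/M$ from the corresponding side of $T^{*}$. Now round each vertex of $\lambda T^{*}$ to the nearest lattice point, displacing each vertex by at most $\sqrt{2}/(2M)$. Since the Hausdorff distance between a convex polygon and the polygon obtained by perturbing its vertices is bounded by the maximum vertex perturbation, and the $\epsilon$-erosion of one convex set is contained in any other convex set at Hausdorff distance $\leq\epsilon$, the $2/M$-erosion of $\lambda T^{*}$ (which contains $T^{*}\supseteq K$) is still contained in the rounded lattice triangle. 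For the area: dilation multiplies by $\lambda^{2}=1+O(1/(SM))$, and the rounding adds at most $O(d(\lambda T^{*})/M)=O(1/M)$. Since $SM>100$, both corrections are small multiples of $S$, and the final area is comfortably below $100S$.

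\emph{Case 2 ($S\leq 100/M$).} Here $T^{*}$ itself may be thinner than $1/M$ and the dilation argument breaks, but the bound $10000/M$ is much looser than $100S$. Let $P$ be a minimum-area enclosing parallelogram of $K$, of dimensions $\ell\times w$ with $\ell w\leq 2S\leq 200/M$, $\ell\leq d(K)\leq 1$, and some orientation $\theta$. By Dirichlet's theorem there exists a lattice vector $u\in\la$ of length at most $\ell+O(1/M)$ whose direction differs from $\theta$ by angular error $O(1/M)$. Taking $u$ as one side of a lattice triangle and choosing a lattice apex at perpendicular distance $O(1/M+w)$ on the appropriate side of $u$ yields a lattice triangle (or, if $K$ extends on both sides of $P$'s long axis, a minor variant combining two such triangles into a single enclosing one) of area $O(\ell/M+\ell w)=O(1/M)\leq 10000/M$ that contains $K$.

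The main obstacle is controlling the area blow-up from the rounding step in Case 1: for a thin minimum-enclosing triangle, a $1/M$ perturbation of the vertices can change the area by an amount proportional to the triangle's longest side, which could in principle exceed $S$ by a large factor. The threshold $S>100/M$ is precisely calibrated so that the dilation factor $\lambda-1=O(1/(SM))$ is small enough to create a buffer of at least $2/M$ around $T^{*}$ without inflating the area beyond a bounded multiple of $S$. The generous constants $100$ and $10000$ in the lemma statement absorb the various slack factors in the classical enclosure theorem, the dilation, the lattice rounding, and the Dirichlet approximation.
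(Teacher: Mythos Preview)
Your Case~1 argument is correct and takes a genuinely different route from the paper. You invoke the classical enclosing-triangle bound (area $\leq 2S$), dilate from the centroid, and round the vertices to lattice points; the paper instead builds an explicit bounding rectangle aligned with the diameter segment of $K$, doubles it to a triangle $ABC$, then reflects $ABC$ through each of its three vertices and uses the fact that each reflected triangle's inscribed circle has radius $\gtrsim 1/M$ and hence contains a lattice point --- those three lattice points form the desired $T$, which is then shown to lie inside a $4{:}1$ dilate of $ABC$. Your approach is shorter and yields a smaller constant; the paper's is more self-contained (no external enclosing-triangle theorem) but more elaborate.

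Your Case~2, however, has real gaps. First, the angular error when you replace a segment of length $\ell$ in direction $\theta$ by a nearby lattice vector is $O(1/(M\ell))$, not $O(1/M)$, and this blows up for small $\ell$; invoking ``Dirichlet's theorem'' does not help, since the lattice denominator is fixed at $M$. Second, the construction of the enclosing lattice triangle (``a lattice apex at perpendicular distance $O(1/M+w)$ on the appropriate side of $u$'', and especially the ``minor variant combining two such triangles into a single enclosing one'') is too vague to verify the containment $K\subseteq T$. The final area estimate $O(\ell/M+\ell w)=O(1/M)$ is plausible, but you have not actually built a triangle and checked that it contains $K$. The paper sidesteps all of this with a one-line reduction you missed: when $S\leq 100/M$, simply enlarge $K$ to any convex $K'\subseteq\Gamma$ of area exactly $100/M$ (possible once $M>100/\mathrm{area}(\Gamma)$) and apply Case~1 to $K'$, giving a lattice triangle of area at most $100\cdot(100/M)=10000/M$. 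This makes Case~2 an immediate corollary of Case~1 rather than a separate construction.
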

\begin{lemma}\label{lem:bounded cls}
Let $\Gamma, K, M, S$ be as before. If $S > 100/M$, then there exists a triangle $T\in\tm$ which is contained in $K$ and whose area is not less than $S/100$.
\end{lemma}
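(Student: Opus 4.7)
The strategy is to inner-approximate $K$ by a convex lattice polygon $H\subseteq K$ whose area is almost that of $K$, and then extract a triangle in $\tm$ from the vertices of $H$ via the classical maximum-inscribed-triangle construction. Let $K^{\mathrm{int}}$ be the union of all closed unit cells of $\la$ (axis-aligned $\tfrac{1}{M}\times\tfrac{1}{M}$ squares with corners in $\la$) entirely contained in $K$, let $V\subseteq\la\cap K$ be the set of corners of these cells, and set $H:=\operatorname{conv}(V)$. Then $H\subseteq K$, because $V\subseteq K$ and $K$ is convex, and $K^{\mathrm{int}}\subseteq H$, because each cell of $K^{\mathrm{int}}$ is the convex hull of four points of $V$.

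To bound $\area(H)$ from below, observe that $K\setminus K^{\mathrm{int}}$ is covered by the lattice cells that meet $\partial K$. A standard argument shows that a rectifiable curve of length $L$ meets at most $O(LM)$ cells of side $1/M$, while Cauchy's formula yields $\operatorname{perim}(K)\leq\pi\,\operatorname{diam}(K)\leq\pi$ for convex $K\subseteq\Gamma$. Hence $\area(K\setminus K^{\mathrm{int}})\leq C/M$ for some absolute constant $C$, and the hypothesis $S>100/M$ gives $\area(H)\geq\area(K^{\mathrm{int}})\geq S - C/M\geq S/2$; in particular $V\neq\varnothing$.

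To produce the lattice triangle, let $P_1,P_2\in V$ realize $d_0:=\operatorname{diam}(H)$ and let $P_3\in V$ maximize the perpendicular distance $h$ from the line $L:=P_1P_2$. On a convex polygon both the diameter pair and the point of maximum distance from a given line are attained at vertices, so $P_1,P_2,P_3$ are vertices of $H$ and therefore lie in $V\subseteq\la$. Moreover every $P\in H$ satisfies $|P-P_1|,|P-P_2|\leq d_0$, which forces the orthogonal projection of $P$ onto $L$ to lie in $[P_1,P_2]$; combined with the distance bound $h$ perpendicular to $L$ (on both sides of $L$), this places $H$ in a $d_0\times 2h$ rectangle, so $\area(H)\leq 2hd_0$. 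Therefore
\[
\area(P_1P_2P_3)=\tfrac{1}{2}d_0h\geq \tfrac{1}{4}\area(H)\geq S/8 > S/100,
\]
and $T:=P_1P_2P_3\in\tm$ is contained in $H\subseteq K$, completing the proof.

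The sole nontrivial estimate is the boundary loss $\area(K\setminus K^{\mathrm{int}})=O(1/M)$, which is where convexity and the perimeter control of $K$ genuinely enter the argument; once that estimate is in place, $H$ captures almost the full area of $K$ and the standard ``quarter-area'' inscribed-triangle construction, whose vertices automatically lie among the vertices of $H$, finishes the proof.
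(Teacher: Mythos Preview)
Your argument is correct. The paper actually omits its own proof of this lemma, saying only that it ``uses a similar technique'' to the proof of Lemma~\ref{lem:bounding cls}. By analogy with that proof, the intended approach is presumably to first find a large triangle inside $K$ (e.g.\ via diameter and farthest-point as you do, but applied directly to $K$), then use the area lower bound $S>100/M$ to show that this triangle has inscribed circle of radius $\gg 1/M$, and finally shrink the triangle about its incenter so that each vertex can be replaced by a nearby lattice point while the shrunk triangle stays inside the original one, hence inside $K$.

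Your route is genuinely different: you first pass to the inscribed lattice polygon $H=\operatorname{conv}(\la\cap K^{\mathrm{int}})$ and control the area loss via the perimeter bound, and only then run the diameter/farthest-point construction, observing that on a polygon both extrema are attained at vertices, hence automatically at lattice points. This neatly sidesteps the perturbation step of the paper's approach; on the other hand, the paper's approach avoids the boundary-cell counting. Both are short; yours is arguably cleaner since the lattice structure is handled once and for all at the outset. The only place to be slightly careful is the constant in $\area(K\setminus K^{\mathrm{int}})\le C/M$: for a convex boundary inside a diameter-$1$ region one gets at most about $4M+O(1)$ boundary cells, so $C$ is comfortably below the $50$ needed for $S-C/M\ge S/2$ under the hypothesis $S>100/M$.
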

\begin{lemma}\label{lem:not to big}
Let $M, \Gamma$ be as before. Then there exists a square $\mathcal{D}$ of side length $100$ whose vertices are in $\la$  such that for any closed convex subset $K\subseteq \Gamma$,  the corresponding triangle $T\in\tm$ that we construct in Lemma \ref{lem:bounding cls} lays in the interior $\mathcal{D}$.
\end{lemma}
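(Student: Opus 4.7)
The plan is to first understand the construction of the bounding triangle $T$ in \lemref{lem:bounding cls} well enough to see that its vertices lie at bounded distance from $\constK$, and then to choose $\mathcal{D}$ as an axis-aligned lattice square of side length $100$ centered sufficiently near $\constK$ so that a bounded neighborhood of $\constK$ sits in the interior of $\mathcal{D}$.

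First I would revisit the construction behind \lemref{lem:bounding cls}. The natural way to bound a convex $K\subseteq\constK$ by a triangle in $\tm$ is to take three lines of fixed lattice-aligned slopes (for instance horizontal, vertical and slope $-1$), each supporting $K$ from outside, and then to push each of them outward until it passes through two points of $\la$. The resulting bounding triangle $T$ belongs to one of a bounded set of "shapes'' (the three sides have fixed slopes, so $T$ is determined up to translation by three scalar parameters). Since the outward push introduces an error of at most $O(1/M)$, since the three supporting lines each pass within $\mathrm{diam}(\constK)=1$ of any fixed point of $\constK$, and since the vertices of $T$ are pairwise intersections of these lines, every vertex of $T$ lies within some universal distance, say $C=10$, of $\constK$.

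Now fix any point $p_0\in\constK$ and let $q\in\la$ be a lattice point with $\|q-p_0\|\leq\sqrt{2}/M$. Define $\mathcal{D}$ to be the axis-aligned square with vertices in $\la$ and side length $100$ whose center is the lattice point nearest to $q$. (For very small $M$, where $1/M$ is comparable to $100$, the statement is essentially trivial as one can take a single lattice cell to engulf $\constK$; we may assume $M$ is as large as needed.) Since $\mathrm{diam}(\constK)=1$, every point of $\constK$ lies within distance $1+\sqrt{2}/M$ of $q$, and the $C$-neighborhood of $\constK$ therefore lies within distance $C+1+\sqrt{2}/M$ of $q$, which is strictly less than $50$. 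Hence the $C$-neighborhood of $\constK$ is contained in the open interior of $\mathcal{D}$, and in particular so is every triangle $T\in\tm$ produced as above.

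The main obstacle is the first paragraph: ensuring that the construction in \lemref{lem:bounding cls} can be taken to yield triangles of uniformly bounded diameter, not merely bounded area. The area bound by itself does not preclude a long thin triangle whose vertices escape any fixed neighborhood of $\constK$. The remedy is to fix, once and for all, three lattice-aligned slopes and to carry out the proof of \lemref{lem:bounding cls} using only supporting lines in those three directions. Every such triangle is a translate of a triangle of the same shape from a finite family, so once its area is bounded so is its diameter, and the choice of $\mathcal{D}$ above then works uniformly in $K$.
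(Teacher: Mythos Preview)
Your diagnosis of the difficulty is exactly right: an area bound alone does not control the diameter of $T$, so a long thin lattice triangle could in principle escape any fixed square. But your proposed remedy --- redoing the construction of \lemref{lem:bounding cls} with three supporting lines of \emph{fixed} lattice slopes --- does not work. If $K$ is a thin convex set whose long direction is not close to any of your three fixed directions (say a sliver of width $\varepsilon$ and length close to $1$, tilted at a generic angle), then the triangle cut out by the three fixed-slope supporting lines has area of order $\mathrm{diam}(K)^2$, not of order $\mathrm{area}(K)$. So the area conclusion of \lemref{lem:bounding cls} would fail for your modified construction, and you cannot then feed the area bound back in to deduce a diameter bound. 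In short, the three-slope construction gives bounded diameter but loses the area estimate; the two are not both available that way.

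The paper avoids this by strengthening \lemref{lem:bounding cls} itself rather than changing its method. In the appendix the lemma is proved in a form that also asserts $\mathrm{diam}(T)\leq 12$: the construction adapts to $K$ (one starts from the direction of a diameter of $K$, encloses $K$ in a rectangle and then a triangle of comparable area, reflects through the vertices, and picks lattice points inside the inscribed circles of the reflected triangles). This adaptive construction is what gives the $O(S)$ area bound, and the diameter bound falls out of the same picture since $\mathrm{diam}(K)\leq\mathrm{diam}(\constK)=1$. Once $\mathrm{diam}(T)\leq 12$ is known, the present lemma is immediate: fix any $O\in\constK$; every point of $T$ lies within $12+1=13$ of $O$, hence inside a disk of radius $13$ independent of $K$, and any lattice square of side $100$ containing that disk serves as $\mathcal{D}$.
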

We denote by $\tmgamma$ the set of triangles from $\tm$ which are contained in the interior of the triangle $\mathcal{D}$ constructed in the above lemma.
Note that there are $O(M^6)$ triangles in $\tmgamma$.

We are now ready to prove Theorem \ref{thm:convexunit}.
\begin{proof}
(Proof of theorem \ref{thm:convexunit}).
The strategy will be to find an $\varepsilon$ such that with very high probability there are no $k$ points in convex position that bound an area of at most $\varepsilon$, and on the other hand, with very high probability there are no $k$ points such that the interior of their convex hull contains a small number of points (less than $k$) and bounds an area of more than $\varepsilon$. As a conclusion we have that the probability there is a subset of $k$ points in convex position with less than $k$ other points inside its convex hull is very low.

Let $X=\{x_1,...,x_{n}\}$ be a set of $n$ points in $\Gamma$, chosen uniformly and indepndently with respect to area. Denote $[n]={\{1,2,..,n\}}$. let $k$ be $\lceil\log (n)\rceil$. Choose $M$ to be $n$, $\varepsilon = \frac{10 k}{n}$.
For any $J\subseteq{[n]}$, let $X_J= \{x_i|i\in{J}\}$. Denote by $C_J$ the event that $X_J$ is a convex set of points, and denote by $A_{J,\varepsilon}$ the event that the area of the convex hull of $X_J$ is at most $\varepsilon$.
For any  triangle $T$, denote by $B_{J,T}$ the event that all points of $X_J$ lie in $T$. Finally, denote by $c_J$, $a_{J,\varepsilon}$, and $b_{J,T}$ respectively the probabilities of these events. \\
Let $P(k,\varepsilon)$ be the probability that there exists a set of $k$ points in $X$ which are in convex position and whose convex hull has area at most $\varepsilon$.

By the union bound,

$$P(k,\varepsilon)\leq{\sum_{J\subseteq{[n]},|J|=k}{P(C_J\cap{A_{J,\varepsilon}})}}.$$
\\
$\log {n}\rightarrow \infty$ as $n\rightarrow\infty$, therefore we see that for large enough $n$, $\varepsilon > 100/M$.
Hence, by Lemma \ref{lem:bounding cls}, for any $J\subseteq{[n]}$ such that the area bounded by the convex hull of $X_J$ is smaller than $\varepsilon$ there is some $T$ in $\tm$ with $\area(T)\leq{100\varepsilon}$ and $X_J\subseteq{T}$. Moreover, Lemma \ref{lem:not to big} tells us that $T\in\tmgamma$.
Thus we get for any $|J|=k$,
$${P(C_J\cap{A_{J,\varepsilon}})}\leq\sum_{T\in{\tmgamma},\area(T)\leq 100\varepsilon} P(C_J\cap{B_{J,T}}).$$

But by the Theorem \ref{thm:barany}, $P(C_J\cap B_{J,T})\leq {{{B_2}/{k^2}}^k}{area(T)^k}$. Thus, \\
$${P(C_J\cap{A_{J,\varepsilon}})}\leq{\sum_{T\in \tm,\area(T)\leq 100\varepsilon} (\frac{B_2}{k^2})^k(\area(T))^k\leq LM^6(\varepsilon)^k}(\frac{B_2}{k^2})^k.$$

Where $L$ is some positive constant. The last inequality follows from the fact that there are $O(M^6)$ triangles in $\tmgamma$. Note that we have used the fact conditioning over some fixed domain inside $\Gamma$ gives rise to uniform samples from this domain.\\
%that when conditioning that the points lay in some fixed domain inside $\Gamma$, the conditioned probability distribution is again uniform with respect to area (the area of the fixed domain).
Hence, $P(k,\varepsilon)\leq \sum_{J\subseteq{[n]},|J|=k} L M^6 {100 \varepsilon}^k (\frac{B_2}{k^2})^k = \binom{n}{k} L M^6{100 \varepsilon}^k (\frac{B_2}{k^2})^k .$ \\
By using standard binomial bounds, we see that the latter expression is bounded by

$\leq L(\frac{ne}{k})^k n^6 (\frac{10000 k^2}{n})^k(\frac{B_2}{k^2})^k =
L n^6 (\frac{10000 B_2 e}{k})^k \stackrel{n\longrightarrow \infty}{\longrightarrow}{0} .$
Where we have substituted our choices of $M,k,\varepsilon$ as functions of $n$.

For a triangle $T$ and a positive integer $k$, denote by $D_{T,k}$ the event that at most $k$ points of $X$ lay in the interior of the $T$. Denote by $d_{T,k}$ the probability of this event. In the case that event $D_{T,k}$ occurs, there are at least $n-k$ points out of $T$. Thus,
$$d_{T,k} \leq \binom{n}{k}(1-\\area(T))^{n-k}$$
Let $Q(k,\varepsilon)$ be the probability that there exist $k$ points in $X$ in convex position which bound an area of at least  $\varepsilon$ and whose interior contains at most $k$ other points of $X$.\\
According to Lemma \ref{lem:bounded cls} for any $J\subseteq[n],|J|=k$ such that $X_J$ is in convex position and bounds an area of at least $\varepsilon$ there exists a triangle $T\in \tm$ in the interior of the convex hull of $X_J$ with $\area(T)\geq{\frac{\varepsilon}{100}}$. As $X_J$ has at most $k$ points in its interior, $T$ also has at most $k$ points in its interior as well. We conclude:
\bae
Q(k,\varepsilon) &\leq \sum_{T \in \tmgamma,\area(T)
\geq \frac{\varepsilon}{100}} d_{T,k} \\
&\leq \sum_{T \in \tmgamma, \area(T)
\geq \frac{\varepsilon}{100}} \binom{n}{k} (1-\area(T))^{n-k}\\
&\leq{L' M^6{\binom{n}{k}(1-\frac{\varepsilon}{100})^{n-k}}}.\\
\eae
Again $L'$ is some constant.
With the same choices of $M,k,\varepsilon$ as before and using standard estimates that the last expression we have obtained is approximately
\bae \nonumber
(\frac{ne}{k})^{k} L' {n^6}{(1-\frac{k^2}{n})}^{n-k} &\leq{L' (\frac{ne}{k})^{k}{n^6}{(1-\frac{k^2}{n})^{n}{2^k}}}  \\
& \leq{L' (\frac{2ne}{k})^{k}{n^6}{\frac{1}{e^{k^2}}}} \\
& = L' \frac{n^k(2e)^kn^6}{e^{k^2}k^k}.\\
\eae
As $k = k(n)\geq{\log n}$, $\frac{n^k}{e^{k^2}}\leq{1}$ and thus the last expression is bounded by:

$$\leq{\constD \frac{(2e)^kn^6}{k^k}}\stackrel{n\longrightarrow \infty}{\longrightarrow}{0}.$$

Now let $C(k)$ be the probability that there exists a set $J\subseteq[n],|J|=k$ such that the points of $X_J$ are in convex position, and the interior of their convex hull contains $k$ other points in its interior. As before we assume that $k = \lceil{\log n}\rceil$. Also we take $\varepsilon$ as before. By definition it is clear that $C(k)\leq{P(k,\varepsilon)+Q(k,\varepsilon)}$. But we have showed that
$P(k,\varepsilon),Q(k,\varepsilon)\stackrel{n\longrightarrow \infty}{\longrightarrow}0$, and thus our proof is complete.
\end{proof}

We can now use the method of coverings by lattice triangles which we have developed to bound the expected number of empty $k$-gons ($k$-gons without any other points from the set in the interior of their convex hull).
Indeed, as a consequence of Theorem \ref{thm:convexunit} we have:
\begin{corollary}
Let $\constK$ as above. Let $X$ be a set of $n$ points chosen with uniform distribution in $\Gamma$. With probability that tends to 1 as $n$ tends to infinity, if $k\geq{\log n}$ there are no empty convex k-gons whose vertices are in $X$.
\end{corollary}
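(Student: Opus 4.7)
The plan is to derive the corollary as a short consequence of Theorem~\ref{thm:convexunit}. Recall that this theorem asserts that with probability tending to $1$ as $n\to\infty$, every subset of $X$ consisting of $k\geq\log n$ points in convex position has at least $k$ other points of $X$ inside its convex hull. In particular, on the event guaranteed by the theorem, no such subset can be empty, since $k\geq\log n\geq 1$ for $n$ large.

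The only mild subtlety is that the corollary asks for a uniform conclusion over all $k\geq \log n$, whereas Theorem~\ref{thm:convexunit} is formulated for subsets of a single fixed size. I would bridge this gap by a trivial monotonicity argument: apply the theorem at the single value $k_0=\lceil\log n\rceil$, and observe that the existence of an empty convex $k$-gon with $k\geq k_0$ forces the existence of an empty convex $k_0$-gon. Indeed, given such a $k$-gon with vertices $v_1,\dots,v_k$ in cyclic order, pick any $k_0$ of them to form a subset $S$; the convex hull of $S$ sits inside the original polygon, and each $v_m\notin S$ lies on an arc of the original polygon between two consecutive vertices of $S$, hence on the far side of the corresponding edge of the convex hull of $S$. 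So no vertex outside $S$ is in the interior of the convex hull of $S$, and combined with the emptiness of the original $k$-gon this gives an empty convex $k_0$-gon on vertices of $X$.

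Combining these observations, on the high-probability event furnished by Theorem~\ref{thm:convexunit} for $k_0=\lceil\log n\rceil$, there is no empty convex $k_0$-gon, and therefore no empty convex $k$-gon for any $k\geq\log n$. I do not anticipate any serious obstacle: all the probabilistic and geometric heavy lifting is already contained in Theorem~\ref{thm:convexunit}, and the reduction above is entirely elementary. If one wished to push the statement a bit further (say, to $k$ somewhat smaller than $\log n$), one could instead re-run the proof of Theorem~\ref{thm:convexunit} directly with the sharper estimate $d_{T,0}=(1-\mathrm{area}(T))^{n-k}$ replacing $d_{T,k}$, but this is not needed here.
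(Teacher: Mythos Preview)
Your proposal is correct and matches the paper's approach: the paper simply states the corollary ``as a consequence of Theorem~\ref{thm:convexunit}'' without giving any further argument. Your monotonicity step (passing from an empty convex $k$-gon to an empty convex $k_0$-gon by selecting $k_0$ of its vertices, using that each omitted vertex is an extreme point of the original vertex set and hence lies outside the convex hull of the selected subset) is exactly the elementary detail the paper leaves implicit, and it is sound.
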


\section{random upper bound}
\label{sec:random upper}

Next we state an upper bound for the random case and sketch the proof:\\
\begin{definition}\label{def:exp_conv}
For any bounded, closed, convex set in the plane, with piecewise smooth boundary $\Gamma$, denote by \emph{$C(\constK,n)$} the expected number of points of which are the vertices of the boundary of the convex hull of a random set of $n$ points in $\Gamma$.
\end{definition}

\begin{claim}
Let $\constK$ be as in Definition \ref{def:exp_conv}. Let $X$ be a set of $2n$ points chosen with uniform distribution in $\constK$. Then there exists some $s>0$ such that, as $n$ tends to infinity, the probability tends to 1 that the removal of some $2n-sC(\constK,2n)$ edges will eliminate all non-crossing matchings.
\end{claim}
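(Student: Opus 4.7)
The plan is to refine the trivial upper bound $h(\graphA)\le 2n-2$, which isolates a single vertex at the cost of $2n-1$ edge removals, by exploiting the $k := C(\constK, 2n)$ convex hull vertices of $X$ to save $\Theta(k)$ removals. First I would fix a convex hull vertex $v$ of $X$, and denote the other convex hull vertices by $v_{1},\dots,v_{k-1}$. The overall goal is to construct a removal set of size $2n - \Theta(k)$ instead of $2n-1$.

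The first step would be to remove all $2n-k$ edges from $v$ to the non-hull vertices. After this, in any surviving non-crossing perfect matching, $v$ must be matched to some $v_{j}$. For each such chord $vv_{j}$, let $L_{j}$ and $R_{j}$ be the two sides of its supporting line intersected with $X\setminus\{v,v_{j}\}$; the matching uses $vv_{j}$ only if both $L_{j}$ and $R_{j}$ admit non-crossing perfect matchings. Hence it suffices, per $j$, to destroy every non-crossing matching on one of $L_{j}, R_{j}$, and the whole question reduces to bounding the total cost of this blocking step.

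The claim will then follow if, on average per chord, only a bounded number of extra edges are required to block one of the two sides. Using the uniform distribution of $X$ inside $\constK$, I would argue that with probability tending to $1$ each relevant near-boundary region $L_{j}$ or $R_{j}$ contains a small exploitable configuration (for instance an odd-sized cluster near a sharp part of the boundary) whose matchability can be destroyed by $O(1)$ edge deletions. Summing this blocking cost over the $k-1$ chords yields at most $c(k-1)$ extra edges for some absolute constant $c<1$, for a total of $2n-k+c(k-1)\le 2n-(1-c)\,C(\constK,2n)+c$ removals, giving the claim with $s=1-c$.

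The main obstacle will be to make the blocking step simultaneously quantitative and compatible across different chords: one must show that with probability tending to $1$, each chord $vv_{j}$ admits a constant-size blocking set on one of its sides, and that these sets can be chosen almost disjointly across $j$ so their union still has size $O(k)$. The disjointness is the delicate part, and I would approach it via an angular spread argument for the chords $vv_{j}$, which point in roughly $\Theta(k)$ distinct directions, combined with the empty-polygon estimate of \thref{thm:convexunit}, which guarantees that the small near-boundary clusters used for blocking are typically non-overlapping. Without such a disjointness guarantee the blocking cost could absorb the savings from keeping the hull edges intact, which would collapse the argument.
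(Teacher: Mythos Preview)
Your first move coincides with the paper's: delete all edges from a fixed hull vertex $v=P_0$ to the interior points, so that any surviving non-crossing perfect matching must pair $v$ with some other hull vertex $v_j$. The gap is in what you do with the remaining $k-1$ chords.

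You assert that for each $j$ one can ``destroy every non-crossing matching on one of $L_j,R_j$'' with $O(1)$ edge deletions. This is exactly what \thref{thm:general} forbids. Since $|L_j|+|R_j|=2n-2$, the two sides have the same parity; when $|L_j|=2m$ is even, killing all non-crossing perfect matchings of $L_j$ needs at least $m$ edge removals, and the other side needs at least $n-1-m$. An ``odd-sized cluster'' \emph{inside} $L_j$ does not help: making a sub-configuration unmatchable does not make $L_j$ itself unmatchable. So for the even-parity chords the per-chord cost is $\Theta(\min(|L_j|,|R_j|))$, not $O(1)$, and no disjointness bookkeeping can drive the average below $1$; your inequality $c<1$ is therefore unsupported.

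The observation you are missing is that many chords already cost \emph{zero}. If $|L_j|$ is odd then so is $|R_j|$, and the edge $vv_j$ can lie in no non-crossing perfect matching at all. The paper sets $O(X)=\{v_j:\ vv_j\ \text{gives an odd split}\}$ and simply removes from $v$ every edge except those to $O(X)$; the only partners left for $v$ then give odd sides, so no matching survives. This costs $2n-1-|O(X)|$ edges. Since about half of the hull vertices land in $O(X)$ (in expectation $|O(X)|\approx\tfrac12\,C(\constK,2n)$, and with probability tending to $1$ one has $|O(X)|\ge s\,C(\constK,2n)$ for some fixed $s>0$), the claim follows directly---no side-blocking, angular-spread argument, or appeal to \thref{thm:convexunit} is needed.
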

This bound depends on $C(\constK,n)$. It is known that $C(P_m,n)=\Theta(m \log n)$ where $P_m$ is an $m$-gon, and $C(Cr,n)=\Theta(\sqrt[3]{n})$ where $Cr$ is a circle. [see \cite{HAR}]\\
\begin{proof}
\emph{[Proof Sketch]} Let $P_0$ be a point on the boundary of the convex hull of $\constK$. For any other point $P$ on that boundary, the segment $PP_0$ splits $X$ into two parts such that any segment between two points from different parts crosses $PP_0$. If the number of the points on each side of $PP_0$ is odd then $PP_0$ is not a part of any perfect non-crossing matching. \\
Let $O(X)=\{P\in X|  P$ is on the boundary of the convex hull and $PP_0$ splits $X$ into odd parts$\}$. If we remove all edges connecting $P$ to any other point in $X-O(X)$, then clearly we eliminate all matchings.  For large enough $n$ ,  $E(|O(X)|)=\frac{C(K,n)}{2}$.  With probability that tends to 1 for some $s>0$ , $|O(X)|\geq{s{C(K,n)}}$ and we can remove $|X-O(X)|\leq{2n-s{C(K,n)}}$ edges and eliminate all matchings.
\end{proof}

\newpage
%\bibliography{all}

\begin{thebibliography}{99}

\bibitem[AOCS]{AOCS}
    O. Aichholzer, S. Cabello, R. Fabila-Monroy, D. Flores-Penaloza, T. Hackl, C. Huemer, F. Hurtado, D. R. Wood, Edge removal and non-crossing configurations in geometric graphs, Discrete Mathematics and Theoretical Computer Science, 12(1) (2010), 75-86.

\bibitem[BA1]{BA1}
I. Barany,Z. Furedi, Empty simplices in Euclidean space, Canadian Mathematical Bulletin, 30 (1987), 436-445.

\bibitem[BA2]{BA2}
I. Barany I, Sylvester's question: The probability that n points are in convex position,
Annals of Probability, 27 (1999), 2020-2034.

\bibitem[BA3]{BA3}
I. Barany, P. Valtr, Planar point sets with a small number of empty convex polygons, Studia Scientiarum Mathematicarum Hungarica, 41 (2004), 243-266.

\bibitem[BLAS]{BLAS}
W. Blaschke, Vorlesungen über Differentialgeometrie, II, Affine Differentialgeometrie, Springer-Verlag, Berlin, (1923).

\bibitem[BR]{BR}
P. Brass, W. Moser, J. Pach, Research Problems in Discrete Geometry, Springer-Verlag, New York, (2000).

\bibitem[CE]{CE}
J. Cerny, Z. Dvorak, V. Jelinek, J. Kara, Noncrossing Hamiltonian paths in geometric graphs, Discrete Applied Mathematics, 155 (2007), 1096-1105.

\bibitem[DW]{DW}
R.A. Dwyer, On the Convex Hull of Random Points in a Polytope,
Journal of Applied Probability, 25 (1988), 688-699.

\bibitem[HAR]{HAR}
S. Har-Peled, On the Expected Complexity of Random Convex Hulls, Part of PhD thesis, (1997).

\bibitem[KE]{KE}
C. Keller, M. A. Perles, On the smallest sets blocking simple perfect matchings in a convex geometric graph, Israel Journal of Mathematics, to appear, DOI: 10.1007/s11856-011-0090-9.

\bibitem[KU]{KU}
K.Y. Kupitz, Extremal problems in combinatorial geometry, Aarhus University Lecture Notes Series, Aarhus University (1979).

\bibitem[MA]{MA}
Jiri Matousek, Using the Borsuk Ulam theorem, Springer Verlag, Berlin, (2003).

\bibitem[PA]{PA}
J. Pach, P.K. Agarwal, Combinatorial Geometry, Wiley-Interscience Series in Discrete Mathematics and Optimization, John Wiley and Sons, (1995), 223-236.

\bibitem[PE]{PE}
M.A. Perles, Personal communication.

\bibitem[PIN]{PIN}
R. Pinchasi, R. Radoicic, M. Sharir, On empty convex polygons in a planar point set, Journal of Combinatorial Theory, 113 (2006), 385-419.

\bibitem[VALTR]{VALTR}
P. Valtr, The probability that n random points in a triangle are in convex position, Combinatorica, (1996).


\end{thebibliography}

\newpage
\appendix
\section{Convex Lattice sets}\label{app:cls}
The proofs in this appendix will be somehow sketchy, as these lemmas are quite standard. In addition, the constants we use in these lemmas are far from being optimal (yet suffice for our goals). We begin by
proving Lemma 3.4. The proof of Lemma 3.5 uses a similar technique (though some technical details
are different) and hence will be omitted. We prove a slightly stronger version.
\begin{lemma}
Let $\Gamma$ be a closed bounded convex set in the plane, with diameter $1$. Let $M>0$ be an integer. Then for any closed convex subset $K$ of $\Gamma$, with area $S$ we have that if $S > 100/M$, then there exists a triangle $T\in\tm$ which contains $K$ and whose area is no more than $64S$. If $S \leq 100/M$, then there exists a triangle $T\in\tm$ which contains $K$, whose area is no more than $6400/M$ and whose diameter is no more than $12$.
\end{lemma}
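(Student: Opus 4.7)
The plan is to produce the lattice triangle $T$ in three steps: circumscribe $K$ by a well-shaped auxiliary triangle $T_0$, dilate $T_0$ from its centroid by a tiny factor to obtain an enlarged triangle $T_1$ that creates a safety buffer, and then round the vertices of $T_1$ to the nearest points of $\la$.

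For the circumscription step I would invoke the planar John theorem to obtain an ellipse $E$ with $E \subseteq K \subseteq 2E$, which implies $\area(E) \geq S/4$. Since $E \subseteq \Gamma$ has diameter at most $1$, its larger semi-axis satisfies $a \leq 1/2$, and hence its smaller semi-axis satisfies $b \geq \area(E)/(\pi a) \geq S/(2\pi)$. I then circumscribe $2E$ by an explicit triangle $T_0$ adapted to the principal axes of $E$, say the triangle with vertices $(\pm 4a, -2b)$ and $(0, 6b)$ in the ellipse-aligned frame. A direct computation shows this triangle simultaneously satisfies $\area(T_0) \leq 32ab \leq (32/\pi) S$, minimum height $h_{\min} \geq 8b \geq 4S/\pi$, and $\mathrm{diam}(T_0) \leq 4\sqrt{a^{2}+4b^{2}} \leq 2\sqrt{5}$.

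For the dilation, let $G$ be the centroid of $T_0$ and set $T_1 = G + \lambda(T_0 - G)$ with $\lambda = 1 + C_4/(M h_{\min})$ for a sufficiently large universal constant $C_4$; since the centroid is at perpendicular distance $h_i/3$ from the $i$-th edge of $T_0$, each edge of $T_1$ lies at perpendicular distance at least $C_4/(3M)$ outside the corresponding edge of $T_0$. Now let $T$ be the lattice triangle whose vertices are the closest points of $\la$ to the vertices of $T_1$, each displaced by at most $\eta := \sqrt{2}/(2M)$. Bounding the area of $T$ is routine: since $\lambda - 1 = O(1/(M h_{\min})) = O(1/(MS))$, the dilation contributes $\area(T_1) - \area(T_0) = O(S) \cdot O(1/(MS)) = O(1/M)$, and the lattice rounding contributes a further $O(\mathrm{diam}(T_1)/M) = O(1/M)$. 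Hence $\area(T) \leq (32/\pi) S + O(1/M)$, which falls below $64 S$ when $S > 100/M$ and below $6400/M$ when $S \leq 100/M$; the diameter bound $\mathrm{diam}(T) \leq 12$ is immediate from $\mathrm{diam}(T_0) \leq 2\sqrt{5}$ together with the small dilation and rounding corrections.

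The main technical obstacle is verifying $K \subseteq T$, which reduces to checking that no supporting line of an edge of $T$ cuts into $T_0$. Such a supporting line passes through two points within $\eta$ of the endpoints of the corresponding edge of $T_1$, so it agrees with the $T_1$-line pointwise within $\eta$ along the segment; however, its extension beyond the segment may tilt, and one finds that a point whose orthogonal projection onto the $T_1$-line lies at parameter $t$ (where $t \in [0,1]$ corresponds to the segment) is shifted by at most $\eta(|1-t| + |t|)$. For our explicit $T_0$ each vertex projects onto each edge's line at some $t \in [0,2]$, so by convexity all of $T_0$ does too, and the worst-case displacement is $3\eta = 3\sqrt{2}/(2M)$. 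Choosing $C_4$ large enough that the buffer $C_4/(3M)$ exceeds $3\eta$ (for instance $C_4 = 10$) ensures $T \supseteq T_0 \supseteq K$ and closes the proof.
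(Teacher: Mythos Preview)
Your route is genuinely different from the paper's. The paper never invokes the John ellipsoid; instead it takes the diameter segment of $K$, encloses $K$ in the minimal rectangle aligned to that segment (area $\le 2S$), doubles this rectangle into a triangle $ABC$ (area $\le 4S$, inradius $\gtrsim 1/M$), and then uses a reflection trick: reflecting $ABC$ through each vertex produces three congruent triangles whose inscribed circles each contain a lattice point $A_1,B_1,C_1$, and one checks that $ABC\subseteq A_1B_1C_1$ sits inside a $4{:}1$ blow-up of $ABC$, giving the factor $64$. The virtue of this device is that containment is automatic --- no buffer or tilt analysis is needed. Your dilate-from-centroid-and-round scheme is a perfectly reasonable alternative, and the $t\in[0,2]$ projection bound is a nice way to control the perturbed supporting lines; the trade-off is exactly the extra verification in your last paragraph.

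There is, however, a real gap in the small-$S$ regime. Your estimate $\area(T_1)-\area(T_0)=O(1/M)$ tacitly assumes $\lambda-1$ is bounded, but $\lambda-1=C_4/(Mh_{\min})$ with $h_{\min}=\Theta(b)$, and $b$ can be arbitrarily small when $S\ll 1/M$ (indeed $b=0$ if $K$ is a segment). In that case $\lambda\to\infty$ and the quadratic term $(\lambda-1)^2\area(T_0)$ blows up, so the claimed bound $\area(T)\le (32/\pi)S+O(1/M)\le 6400/M$ is unjustified. The paper handles this at the outset by replacing $K$, whenever $S<100/M$, by any convex superset in $\Gamma$ of area exactly $100/M$; once you do the same, the second clause follows immediately from the first.

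One small slip: the inequality $h_{\min}\ge 8b$ fails for your explicit triangle when $a$ and $b$ are comparable --- for $a=b$ the two slanted heights equal $16b/\sqrt{5}\approx 7.16b$. The correct uniform bound is $h_{\min}\ge 16b/\sqrt{5}$, which is still $\Omega(S)$ and leaves the rest of your argument intact (you may need to nudge $C_4$ slightly above $10$).
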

\begin{proof}
As we shall be interested in large $M$, we shall assume for convenience that $M$ is larger than $100/area(\Gamma)$. %For smaller values of $M$, though not needed for our arguments, one may convince himself that the claim still holds.

Without loss of generality we assume that $S \geq 100/M$, otherwise we bound $K$ in a closed convex subset of $\Gamma$ with that size.
Let $x,y$ be two points of $K$ with distance $d(x,y) = diam(K)$. Such two points exist, as $K$ is compact. Denote by $l$ the segment between $x$ and $y$, denote by $n_1 ,n_2 $ the two orthogonal line to $l$ at the points $x,y$ respectively. Let $A,B' \in n_1 , D', C' \in n_2$ be the four vertices of the smallest rectangle whose edges are parallel to $l, n_1$ which contains $K$ (in cyclic order). Due to convexity, it is easy to see that its area is no more than twice the area of $K$, and that its diameter is no more than $\sqrt{2}$. Note that convexity reasoning also shows that its shortest edge is at least of length $100/M$. Indeed, if we consider points $z,w$ on either sides of $l$ where $B'D', C'A$ touch $K$. Due to convexity the area of $xzyw$ is at least half of that of $K$. Thus, the sum of altitudes from $z,w$ to $l$ is at least $100/M$.

Now we may bound $AB'C'D'$ in a triangle $ABC$ whose area is twice the area of the rectangle, such that $B'$ is the middle of $AB$, and $C'$ is the middle of $AC$.
Again, the diameter of $ABC$ is no more than $2 \sqrt{2}$, and its shortest edge is of length at least $200/M$. Its area is between twice the area of $K$ and $4$ times this area.
Thus, it is sufficient to prove the theorem for the reduced situation.

Area considerations show that the radius of the inscribed circle of $ABC$ is twice the ratio of its area and its perimeter, and therefore it equals at least $\frac{400/M}{6 \sqrt{2}}$. It is easy to verify that any such circle contains a lattice point in its interior.

Now, reflect the triangle $ABC$ through $A$, and obtain a triangle $A B_A C_A$. Denote by $A_2$ the reflection of $A$ in the line $B_A C_A$. Inside the inscribed circle of $A B_A C_A$ there is a lattice point $A_1$. Similarly one can define the points $B_1 , B_2 , C_1 , C_2$. Clearly $ABC$ is contained in $A_1 B_1 C_1$, which, in its turn is contained in $A_2 B_2 C_2$. The last triangle is similar to $ABC$, with ratio 4:1, thus, its diameter, which is larger than that of $A_1 B_1 C_1$, is no more than $8 \sqrt{2} < 12$, and its area, which is again larger than the area of $A_1 B_1 C_1$, is no more than $64 area(K)$. Thus, $A_1 B_1 C_1$ is the required triangle.

\end{proof}

We now prove Lemma \ref{lem:not to big}:
Let $M, \Gamma$ be as in Subsection \ref{sec:randomconvex}. Then there exists a square $\mathcal{D}$ of side length $100$ whose vertices of in $\la$  such that for any closed convex subset $K\subseteq \Gamma$,  the corresponding triangle $T\in\tm$ that we construct in Lemma \ref{lem:bounding cls} lays in the interior $\mathcal{D}$.
\begin{proof}
First, the diameter of any triangle constructed in Lemma \ref{lem:bounding cls} is no more than $12$.
Thus, Choose any point $O$ of $\Gamma$, then any point of the last triangle is within a distance of no more than $13$ from $O$. Hence lays in a disk of radius $11$ around $O$. We can easily find a lattice square of side $100$ containing this disk. As this disk did not depend on $K$, we are done.
\end{proof}

\newpage

\begin{figure}[]
\begin{center}
\section{Figures}\label{sec:fig}
$$\scalebox{0.7}{\includegraphics{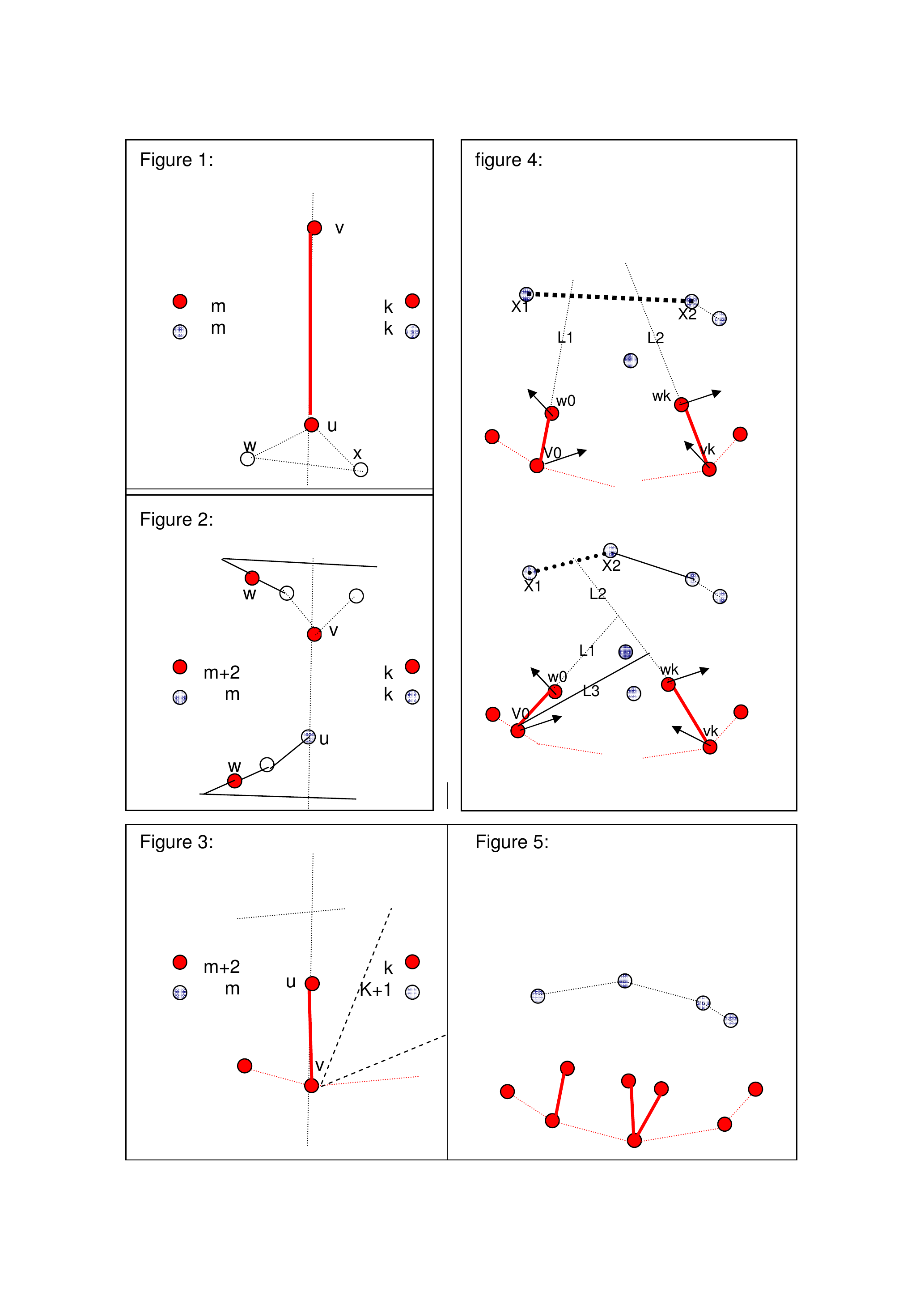}}$$
\end{center}
\end{figure}

\begin{figure}[]
\begin{center}

$$\scalebox{0.7}{\includegraphics{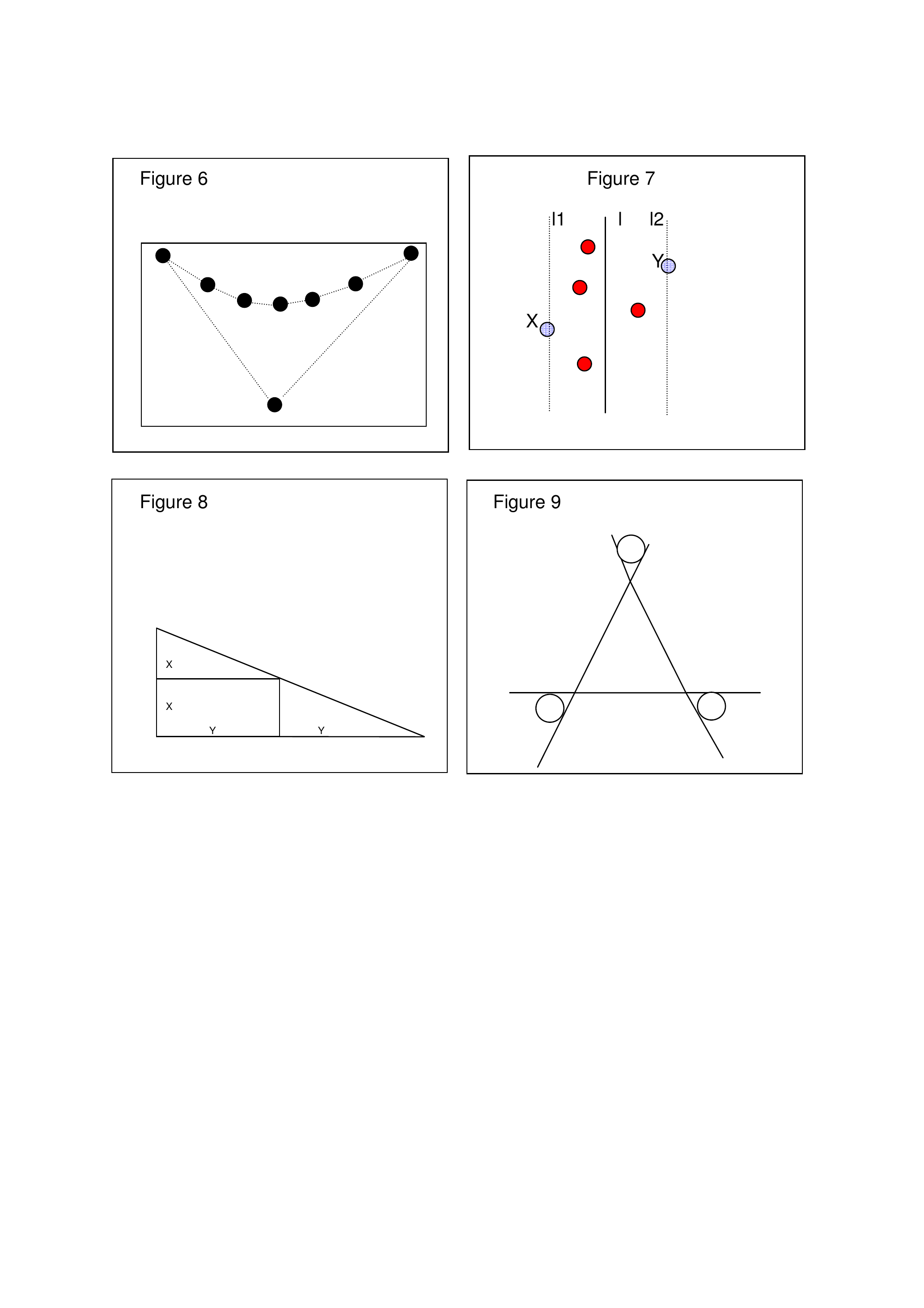}}$$

\end{center}
\end{figure}
%*********************************************************************************************************************************************************
%*********************************************************************************************************************************************************
%*********************************************************************************************************************************************************

\end{document}